\newcommand{\jump}[1]{[\![#1]\!]}
\newcommand{\bff}[1]{{\mathbf{#1}}}
\newcommand{\bs}[1]{{\boldsymbol{#1}}}
\newcommand{\curl}{{\mathrm{curl}}}
\newcommand{\divv}{{\mathrm{div}}}
\newcommand{\HdivH}{\bff H^{-1/2}(\divv_\Gamma,\Gamma)}
\newcommand{\HcurlH}{\bff H^{-1/2}(\curl_\Gamma,\Gamma)}
\newcommand{\TD}{\mathrm{TD}}
\newcommand{\TC}{\mathrm{TC}} 
\newcommand{\HcurlG}{-1/2,\curl,\Gamma}
\newcommand{\HdivG}{-1/2,\divv,\Gamma}
\definecolor{red}{rgb}{0.8,0,0} 
\newtheorem{proposition}{Proposition}[section]
\newtheorem{corollary}[proposition]{Corollary}
\newtheorem{theorem}[proposition]{Theorem}
\numberwithin{equation}{section}
\title{New mapping properties of the \\ Time Domain Electric Field Integral Equation}
\date{\today}
\author{Tianyu Qiu \& Francisco--Javier Sayas\footnote{Department of Mathematical Sciences, University of Delaware, Newark DE 19716.  {\tt \{qty,fjsayas\}@udel.edu}. Partially funded by NSF (grant DMS 1216356)}}
\begin{document}

\maketitle

\begin{abstract}
We show some improved mapping properties of the Time Domain Electric Field Integral Equation and of its Galerkin semidiscretization in space. We relate the weak distributional framework with a stronger class of solutions using a group of strongly continuous operators.  The stability and error estimates we derive are sharper than those in the literature.\\
{\bf AMS Classification.} {65N30, 65N38, 65N12, 65N15, 78M15}.\\
{\bf Keywords.} Electric Field Integral Equation, retarded potentials, boundary integral equations, electromagnetic scattering, semigroup theory.
\end{abstract}

\section{Introduction}
The electric field integral equation (EFIE) has received much attention as a representation of electric fields that is conducive to efficient discretization schemes. The EFIE and its variants are a competitive alternative to the finite element discretization of the Maxwell equations due to two main advantages: (a) reducing problem dimension by one and, especially, (b) handling unbounded domains in a natural way. However, concerns over its numerical stability hamper the method's popularity in practical applications. Several papers, for example \cite{HiSc:2002,MeSm:2014,LiMoWe:2015,ChMo:2015}, address this issue with various techniques, yet most of them focus on the frequency domain analysis. Important as it is, the frequency domain equation is ill-suited to deal with broad band waves, which can be adequately treated in the time domain. Rigorous numerical study of the Time Domain Electric Field Integral Equation (TDEFIE) is, to our knowledge, scarce in the literature. Early attempts \cite{Terrasse:1993, BaBaSaVe:2013, ChMoWaWe:2012} all develop the kind of frequency domain analysis debuting in \cite{BaHa:1986a,BaHa:1986b} and then obtain the time domain estimates by inverse Laplace transforms or Plancherel identities. A lone contribution using time-domain techniques for the analysis ---not dealing with discretization, and limited to smooth domains--- is given in \cite{Rynne:1999}.

Pure time domain analysis has been shown to outperform the double-back-through-Laplace-domain approach in several situations. Time domain analysis in this context originated in \cite{Sayas:2013d} as a tool to analyze long term stability of several boundary integral formulations in acoustics, and was developed in \cite{DoSa:2013} to provide improved bounds for the retarded layer potentials and integral operators of transient acoustics. The same approach was further developed and refined for the direct integral equation formulation of transient scattering by a sound-soft obstacle \cite{BaLaSa:2015}, a boundary integral formulation for transmission problems in acoustics \cite{QiSa:2014}, and indirect formulations for Dirichlet and Neumann problems for acoustics \cite{Sayas:2014}. What distinguishes this paper from the previous time-domain analysis is the transformation of the abstract second order differential equation associated to a dissipative operator to a system of first order equations (in time as well as in space). This simplifies the analysis with respect to \cite{DoSa:2013, BaLaSa:2015, QiSa:2014} by avoiding the introduction of a cut-off boundary that was required to fit the problem in the correct functional framework. We note here that the techniques of  \cite{Sayas:2014} are not easy to adapt to the TDEFIE, due to non-minor complications in the associated Sobolev spaces.

The results in this paper improve the Laplace domain estimates in two ways: (a) less regularity is needed of the input data, i.e., the mapping properties are improved; (b) the constants in our estimates are independent of time, leading to a reassuring conclusion that the solution will not blow up as long as the input data is compactly supported. We also carry out the analysis meticulously to reveal the constant's dependence on the velocity of the wave, which might eventually shed some light on the low frequency breakdown of the EFIE.
Remarkably, these results are valid irrelevant of the scatterer's shape regularity, be it smooth or polyhedral, all thanks to the foundational work \cite{BuCi:1999,BuCoSh:2002}. They naturally incorporate as a special case the mapping properties of the boundary integral operators and layer potentials. We envision our results to be a unifying and instrumental step in carrying out the analysis of the full discretization for different time semidiscretization strategies like finite differences, space-time Galerkin or convolution quadrature. It is also surprising (although this is not new \cite{BaBaSaVe:2013, ChMoWaWe:2012}) to note that the TDEFIE is amenable to a general Galerkin discretization-in-space, and no discrete Hodge decompositions are needed, as opposed to the requirements of the frequency domain EFIE \cite{HiSc:2002}.

The paper is structured as follows. In Section 2, we write the time-dependent Maxwell equations into the TDEFIE and state the main results, concerning the regularity of the solution as a function of the data, before and after Galerkin semidiscretization-in-space, error estimates for Galerkin semidiscretization, and mapping properties of the forward operator.
In Section 3, we set up a mathematically rigorous background to understand the TDEFIE equation in the distributional sense.
In Section 4, we apply $C_0-$semigroup theory techniques to prove the main results, which are then compared with the existing results in the literature in Section 5.

\paragraph{Foreword on notation and background.}
Standard Sobolev space notations like $L^2(\mathcal O),H^m(\mathcal O)$ for boundary or domain $\mathcal O$ are assumed throughout the paper. Boldface notation, such as $\bff L^2(\mathcal O)$ and $\bff H^m(\mathcal O)$, is used for vector valued functions with each component in the corresponding scalar function space. 
Given an open set $\mathcal O$, $\|\cdot\|_{\mathcal O}$ denotes the norm of both $L^2(\mathcal O)$ and $\mathbf L^2(\mathcal O)$, while $(\cdot,\cdot)_{\mathcal O}$ is the associated inner product. 
We will write  $\mathcal B(X,Y)$ to denote the space of bounded linear operators between two Banach spaces $X$ and $Y$. 
For some very basic background on causal vector-valued distributions of a single variable and their Laplace transforms, we refer to \cite[Chapters 2 and 3]{Sayas:2014}, which reduces the scope of a vaste theory that can be explored in \cite{Treves:1967} or \cite{DaLi:1992}. The space of of $k$-times  continuously differentiable functions from an interval $I\subset\mathbb R$ to a Banach space $X$ is denoted
 $\mathcal C^k(I;X)$.

\section{Main results}\label{sec:2}

Let $\Omega_-\subset\mathbb R^3$ be a bounded Lipschitz open set with boundary $\Gamma$, let $\Omega_+:=\mathbb R^3\setminus\overline{\Omega_-}$ be its exterior, and let $\bs\nu:\Gamma \to \mathbb R^3$ be the outward pointing unit normal vector field on $\Gamma$. Our goal is the study of the properties of a boundary integral formulation for the Maxwell equations  in vacuum, written in terms of the electric field, in the exterior of a perfectly conducting scatterer occupying $\Omega_-$. In PDE notation, the problem can be written as the search for  $\bff E:\Omega_+\times [0,\infty) \to \mathbb R^3$ (the scattered electric field) such that
\begin{subequations}\label{eq:2.3}
\begin{alignat}{6}
c^{-2}\partial_t^2 \bff E+\nabla\times\nabla\times \bff E = \bff 0 & \qquad && \mbox{in $\Omega_+\times [0,\infty)$},\\
(\bff E+\bff E^{\mathrm{inc}})\times\bs\nu=\bff 0 & & & \mbox{in $\Gamma\times [0,\infty)$},\\
\bff E(\cdot,0)=\mathbf 0 & & & \mbox{in $\Omega_+$},\\
\partial_t \bff E(\cdot,0)=\mathbf 0 & & & \mbox{in $\Omega_+$}.
\end{alignat}
\end{subequations}
Here $\bff E^{\mathrm{inc}}$ is a given incident field and the curl operator is taken on the space variables. The scattered field is assumed to be radiating, i.e., $\mathbf E(\cdot,t)$ has a bounded spatial support for all $t$.
We will use a single layer representation of $\bff E$ in terms of an unknown $\mathbf J:\Gamma \times \mathbb R\to \mathbb R^3$ satisfying
\begin{equation}
\mathbf J(\cdot,t)\equiv \bff 0 \quad \forall t <0, \qquad \mathbf J(\cdot,t)\times \bs\nu =\bff 0 \qquad \forall t, 
\end{equation} 
that is, $\mathbf J$ is a causal tangential vector field on $\Gamma$. The single layer potential ansatz is given by the formula \cite[2.2.3]{Costabel:2004}
\begin{eqnarray}
\bff E(\mathbf x,t)
	&\!\!=\!\! &-c^{-1}\int_\Gamma \frac1{4\pi|\mathbf x-\mathbf y|}\partial_t \mathbf J(\mathbf y,t-c^{-1}|\bff x-\bff y|)\mathrm d\Gamma(\mathbf y)\\
\nonumber
	&&+c\nabla \int_\Gamma \frac1{4\pi|\mathbf x-\mathbf y|}
		\left(\int_0^{t-c^{-1}|\bff x-\bff y|}\!\!\!\!\!\!\!\!\mathrm{div}_\Gamma \mathbf J(\bff y,\tau)\mathrm d\tau\right)
		\mathrm d\Gamma(\bff y),\quad (\mathbf x,t)\in \Omega_+\times [0,\infty),
\end{eqnarray}
where $\mathrm{div}_\Gamma$ is the tangential divergence operator, and $\nabla$ is the gradient in the $\bff x$ variable. The tangential component of the Maxwell single layer potential on points $\mathbf x\in \Gamma$ is 
\begin{eqnarray*}
(\bs{\mathcal V}_c*\mathbf J)(\bff x,t) 
	&:=& -c^{-1} \bs\nu(\bff x)\times \left(\int_\Gamma \frac1{4\pi|\bff x-\bff y|}\partial_t 
		\mathbf J(\mathbf y,t-c^{-1}|\bff x-\bff y|)\mathrm d\Gamma(\mathbf y) \right)\times \bs\nu(\bff x)\\
	& & + c \left(\nabla_\Gamma \int_\Gamma \frac1{4\pi|\mathbf x-\mathbf y|}
		\left(\int_0^{t-c^{-1}|\bff x-\bff y|}\!\!\!\!\!\!\!\!\mathrm{div}_\Gamma \mathbf J(\bff y,\tau)\mathrm d\tau\right)
		\mathrm d\Gamma(\bff y) \right),
\end{eqnarray*}
where $\nabla_\Gamma$ is the tangential gradient. We emphasize that so far our presentation is merely formal and the convolution symbol in the definition of $\bs{\mathcal V}_c*\mathbf J$ is just notation. The single layer potential will be denoted $\mathbf E=\bs{\mathcal S}_c*\bff J$ and it is defined on both sides of the boundary. The tangential component of the single layer potential is continuous accross the boundary, namely
\[
(\bff E^--\bff E^+)\times \bs\nu=\bff 0,
\]
where the $\pm$ superscripts denote limits from $\Omega^\pm$.
The time derivative of the density can be recovered from tangential values of the single layer potential with the formula
\[
\partial_t \bff J=c\, (\nabla\times\bff E^+-\nabla\times \bf E^-)\times \bs\nu.
\]
More details on the meaning of tangential trace operators will be given below. The only reading of the incident field on the boundary of the conductor is given by its  tangential component:
\[
\bs\beta(\bff x,t):=-\bs\nu(\bff x) \times \bff E^{\mathrm{inc}}(\bff x,t)|_\Gamma \times \bs \nu(\bff x).
\]
The Time Domain Electric Field Integral Equation (TDEFIE) determines $\mathbf J$ by imposing the boundary condition
\begin{equation}\label{eq:2.4}
(\bs{\mathcal V}_c*\mathbf J)(\bff x,t)=\bs\beta(\bff x,t), \qquad (\mathbf x,t)\in \Gamma\times [0,\infty).
\end{equation}
The solvability analysis for this equation will be carried out at the same time as the analysis of a semidiscrete-in-space version of it. Galerkin semidiscretization starts with a finite dimensional space $\bs X_h$ containing elements $\bs\mu^h$ satisfying:
\[
\bs\mu^h\in \bff L^2(\Gamma), 
\qquad 
\bs\mu^h\times\bs\nu\equiv \bff 0, 
\qquad
\mathrm{div}_\Gamma \bs\mu^h\in L^2(\Gamma).
\]
For instance, if $\Gamma$ is a polyhedron, the Raviart-Thomas or Rao-Wilton-Glisson \cite{RaWiGl:1982} elements defined on a triangulation of $\Gamma$ can be used as a discrete space. The semidiscrete Galerkin equations look for $\mathbf J^h:\Gamma\times \mathbb R\to\mathbb R^3$ satisfying
\begin{equation}
\mathbf J^h(\cdot,t)\equiv \bff 0 \quad t<0, \qquad \mathbf J^h (\cdot,t)\in \bs X_h, \quad t\ge 0,
\end{equation}
and 
\begin{equation}\label{eq:2.6}
\int_\Gamma (\bs{\mathcal V}_c*\mathbf J^h)(\mathbf x,t)\cdot\bs\mu^h(\bff x)\mathrm d\Gamma(\bff x)
=\int_\Gamma\bs\beta(\bff x,t) \cdot\bs\mu^h(\bff x,t)\mathrm d\Gamma(\bff x)
\quad \forall\bs \mu^h\in \bs X_h, 
\,\forall t.
\end{equation}
The approximated electric field is the result of inputting the density $\bff J^h$ in the single layer potential expression $\mathbf E^h=\bs{\mathcal S}_c*\mathbf J^h$.

\paragraph{Weak tangential traces.}
In order to state the main result of this paper we need some Sobolev space notation. For an introduction to Sobolev spaces
related to the Maxwell equation we refer to \cite{Monk:2003}. Trace theorems, and a full characterization of the range of the trace operators, are studied in \cite{BuCi:1999,BuCi:2001} for polyhedra, and in \cite{BuCoSh:2002} for general Lipschitz domains. We briefly recall some definitions and results. In the space
\[
\bff H(\mathrm{curl},\Omega_+):=\{\bff u\in \bff L^2(\Omega_+)\,:\, \nabla \times \bff u \in \bff L^2(\Omega_+)\},	
\]
endowed with its natural norm
\[
\| \bff u\|_{\mathrm{curl},\Omega_+}^2:=
\| \bff u\|_{\Omega_+}^2 + \| \nabla\times \bff u\|_{\Omega_+}^2,
\]
we can define a tangential trace operator that extends $\gamma_\tau^+ \bff u= \bff u\times \bs\nu$. This operator is bounded and surjective from $\bff H(\mathrm{curl},\Omega_+)$ to $\bff H^{-1/2}(\mathrm{div}_\Gamma,\Gamma)$, which, roughly speaking, is the space of $\bff H^{-1/2}(\Gamma)$ tangential vector fields whose surface divergence is in $H^{-1/2}(\Gamma)$. The space $\bff H^{-1/2}(\divv_\Gamma,\Gamma)$ is endowed with the norm
\[
\| \bs \rho \|_{-1/2,\divv,\Gamma}^2:=\| \bs \rho\|_{-1/2,\Gamma}^2
+\| \divv_\Gamma \bs\rho\|_{-1/2,\Gamma}^2.
\]
In $\bff H(\mathrm{curl},\Omega_+)$, we can define the tangential boundary component that extends $\pi_\tau^+\bff u=\bs\nu\times \bff u\times \bs\nu$ and is bounded and surjective onto $\bff H^{-1/2}(\mathrm{curl}_\Gamma,\Gamma)$, where $\curl_\Gamma$ is the surface curl. Two  traces, $\gamma_\tau^-$ and $\pi_\tau^-$, can also be defined from the interior domain. For functions $\bff u,\bff v$ that are smooth enough on both sides of $\Gamma$ with bounded support, we have the integration-by-parts formula
\[
(\bff u,\nabla\times\bff v)_{\mathbb R^3\setminus\Gamma}
-(\nabla\times \bff u,\bff v)_{\mathbb R^3\setminus\Gamma}=
\langle \gamma_\tau^-\bff u,\pi_\tau^-\bff v\rangle-
\langle \gamma_\tau^+\bff u,\pi_\tau^+\bff v\rangle,
\]
where $(\cdot,\cdot)_{\mathbb R^3\setminus\Gamma}$ is the  $\bff L^2(\mathbb R^3\setminus\Gamma)$ inner product and the angled bracket is the $\bff L^2(\Gamma)$ inner product. This formula can be extended to $\bff u,\bff v\in \bff H(\mathrm{curl},\mathbb R^3\setminus\Gamma)$, with the angled bracket $\langle\cdot,\cdot\rangle$ now denoting the duality product of $\bff H^{-1/2}(\mathrm{div}_\Gamma,\Gamma)$ and $\bff H^{-1/2}(\mathrm{curl}_\Gamma,\Gamma)$. As is well understood since \cite{BuCi:1999,BuCi:2001,BuCoSh:2002}, these two spaces are dual to each other with the duality product extending the inner product of  square integrable tangential vector fields. The rotation operator $\bs\xi\mapsto \bs\xi\times\bs\nu$, acting on square integrable  tangential vector fields can be extended to an isometric isomorphism between $\bff H^{-1/2}(\divv_\Gamma,\Gamma)$ and $\bff H^{-1/2}(\curl_\Gamma,\Gamma)$. Of capital importance will be the following jump operator, defined in $\bff H(\curl^2,\mathbb R^3\setminus\Gamma)=\{ \bff u\in \bff H(\curl,\mathbb R^3\setminus\Gamma)\,:\, \nabla \times \nabla \times \bff u\in \bff L^2(\mathbb R^3\setminus\Gamma)\}$:
\begin{equation}
\jump{\bff u}_N:=\gamma_\tau^- \nabla \times \bff u-\gamma^+_\tau \nabla \times \bff u.	
\end{equation}

\paragraph{Dependence with respect to time.}
Let us set up the kind of notation we will henceforth use for functions of the space and time variables. We will assume that functions are defined as $f:\mathbb R\to X$, where $X$ is a Sobolev space on a domain or on $\Gamma$. The time derivative of $f$ will be denoted $\dot f$. We will also use the antidifferentiation symbol
\[
\partial^{-1} f(t):=\int_0^t  f(\tau)\mathrm d\tau,
\]
where we are using Bochner integration in the corresponding space $X$. A key space will be
\[
\mathcal W^k_+(X):=\{ f:\mathbb R\to X\,:\, f\in \mathcal C^{k-1}(\mathbb R;X), f\equiv 0 \mbox{ in $(-\infty,0)$},
f^{(k)} \in L^1(\mathbb R;X)\}, 
\]
where the highest order differentiation is taken in the sense of $X$-valued distributions over $\mathbb R$. In this space we can define the cummulative seminorms
\begin{equation}\label{eq:cumm}
H_k(f,t\,|\, X):=\sum_{\ell=0}^k \int_0^t \| f^{(\ell)}(\tau)\|_X\,\mathrm d\tau, \qquad t\ge 0.
\end{equation}
Note that $f\in \mathcal W^k_+(X)$ for $k\ge 1$, then $\| f^{(k-1)}(t)\|$ is uniformly bounded.
Another key space will be 
\[
\mathcal C^k_+(X):=\{ f\in \mathcal C^k(\mathbb R;X)\,:\,  \, f\equiv 0 \mbox{ in $(-\infty,0)$}\}.
\]

The main theorems of this paper concern the dependence of the solution of \eqref{eq:2.6} and of $\mathbf E^h=\bs{\mathcal S}_c*\mathbf J^h$ with respect to $\bs\beta:=-\bs\nu\times \mathbf E^{\mathrm{inc}}|_\Gamma\times \bs\nu$, as well as the error of the approximations of $\mathbf J^h$ to $\mathbf J$ and $\mathbf E^h$ to $\mathbf E$. The theorems will only appeal to the fact that it is a closed subspace of $\bff H^{-1/2}(\mathrm{div}_\Gamma,\Gamma)$ and will not break down even when   $\bs X_h$ is infinite dimensional. In particular the first theorem can also be read as a mapping estimate for the inversion of the continuous equation \eqref{eq:2.4}, which we rephrase as Corollary \ref{cor:2.3}. The expression {\em independent of $h$} has to be understood as independent of the choice of $\bs X_h$.

\begin{theorem}\label{th:1}
Let $\bs \beta\in \mathcal W^2_+(\bff H^{-1/2}(\curl_\Gamma,\Gamma))$, let $\mathbf J^h$ be the solution of \eqref{eq:2.6} and $\bff E^h=\bs{\mathcal S}_c*\bff J^h$. Then, 
$\bff E^h \in \mathcal C^1_+(\bff L^2(\Omega_+))\cap \mathcal C^0_+(\bff H(\curl,\Omega_+))$,
$\bff J^h \in \mathcal C^0_+(\bff H^{-1/2}(\divv_\Gamma,\Gamma))$, 
and there exists $C>0$, independent of $h$ and $t$, such that
\[
\| \bff J^h(t)\|_{\HdivG} + \| \bff E^h(t)\|_{\curl,\Omega_+} \leq 
C \max\{c,c^{-2}\}\, H_2(\bs\beta, t \,|\, \HcurlH)
\]
for all $t\geq 0$.
\end{theorem}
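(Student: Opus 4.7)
The plan is to transform the second-order wave equation satisfied by $\bff E^h$ into a first-order abstract Cauchy problem in $\bff L^2(\mathbb R^3\setminus\Gamma)^2$ generated by a skew-symmetric operator, and then to appeal to the theory of $C_0$-groups of isometries. Concretely, I would introduce an auxiliary field $\bff H^h$ with $\partial_t \bff H^h=-c\,\nabla\times\bff E^h$ and $\bff H^h(\cdot,0)=\bff 0$, so that the Maxwell system $\partial_t\bff E^h=c\,\nabla\times\bff H^h$, $\partial_t\bff H^h=-c\,\nabla\times\bff E^h$ is equivalent to the original second-order equation. Using the jump identities for $\bff E^h=\bs{\mathcal S}_c*\bff J^h$ (continuity of $\gamma_\tau \bff E^h$ and $\jump{\gamma_\tau \bff H^h}=\bff J^h$) together with \eqref{eq:2.6}, the pair $\bff U^h=(\bff E^h,\bff H^h)$ is characterized by this first-order system supplemented by $\jump{\gamma_\tau\bff H^h}\in\bs X_h$ and the Galerkin orthogonality $\langle\pi_\tau \bff E^h-\bs\beta,\bs\mu^h\rangle=0$ for every $\bs\mu^h\in\bs X_h$.

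To reduce to a homogeneous problem I would lift $\bs\beta$: by the surjectivity of the tangential trace $\pi_\tau$ from $\bff H(\curl,\mathbb R^3\setminus\Gamma)$ onto $\HcurlH$, there exists $\bs\beta^\sharp\in\mathcal W^2_+(\bff H(\curl,\mathbb R^3\setminus\Gamma))$ with $\pi_\tau^\pm\bs\beta^\sharp=\bs\beta$ and $\|\bs\beta^\sharp(t)\|_{\curl,\mathbb R^3\setminus\Gamma}\leq C\,\|\bs\beta(t)\|_{\HcurlG}$ uniformly in $t$. The shift $\widetilde{\bff U}=(\bff E^h-\bs\beta^\sharp,\bff H^h)$ then produces $\dot{\widetilde{\bff U}}=\mathcal A_h\widetilde{\bff U}+\bff F$ with $\bff F=(-\dot{\bs\beta^\sharp},\,-c\,\nabla\times\bs\beta^\sharp)$ and $\widetilde{\bff U}(0)=\bff 0$, where
\[
\mathcal A_h(\bff E,\bff H)=c\,(\nabla\times \bff H,\ -\nabla\times \bff E),
\]
with domain
\[
D(\mathcal A_h)=\{(\bff E,\bff H)\in\bff H(\curl,\mathbb R^3\setminus\Gamma)^2:\jump{\gamma_\tau \bff E}=\bff 0,\ \jump{\gamma_\tau \bff H}\in\bs X_h,\ \langle\pi_\tau\bff E,\bs\mu^h\rangle=0\ \forall\bs\mu^h\in\bs X_h\}.
\]
The extended integration-by-parts formula of Section~2 collapses $(\mathcal A_h\bff U,\bff U)_{\mathbb R^3\setminus\Gamma}$ to $-c\,\langle\jump{\gamma_\tau\bff H},\pi_\tau\bff E\rangle$, which vanishes on $D(\mathcal A_h)$ by the jump membership $\jump{\gamma_\tau\bff H}\in\bs X_h$ paired with the annihilator condition on $\pi_\tau\bff E$; hence $\mathcal A_h$ is skew-symmetric. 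Solving the resolvent equation $(I-\mathcal A_h)\bff U=\bff G$ through a coercive constrained mixed variational problem on $\bff H(\curl,\mathbb R^3\setminus\Gamma)^2$ completes the proof of m-dissipativity, so $\mathcal A_h$ generates a $C_0$-group of isometries $\{\mathcal S_h(t)\}$ on $\bff L^2(\mathbb R^3\setminus\Gamma)^2$.

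Standard semigroup estimates applied to the inhomogeneous problem with causal $\bff F\in\mathcal W^1_+(\bff L^2(\mathbb R^3\setminus\Gamma)^2)$ then yield $\widetilde{\bff U}\in\mathcal C^0_+(D(\mathcal A_h))\cap\mathcal C^1_+(\bff L^2(\mathbb R^3\setminus\Gamma)^2)$ together with $\|\widetilde{\bff U}(t)\|\leq H_0(\bff F,t\,|\,\bff L^2)$ and $\|\mathcal A_h\widetilde{\bff U}(t)\|\leq 2\,H_1(\bff F,t\,|\,\bff L^2)$. Recovering $\bff E^h=\widetilde{\bff E}+\bs\beta^\sharp$ (its curl read from the second component of $\mathcal A_h\widetilde{\bff U}$ divided by $c$, plus $\nabla\times\bs\beta^\sharp$) and $\bff J^h=\jump{\gamma_\tau\widetilde{\bff H}}$ (controlled by $\|\widetilde{\bff H}\|_{\curl,\mathbb R^3\setminus\Gamma}$), and gathering the powers of $c$ that appear in $\bff F$ (whose $\bff L^2$ norm is bounded by $\|\dot{\bs\beta^\sharp}\|+c\,\|\nabla\times\bs\beta^\sharp\|$), in the coefficient $c$ built into $\mathcal A_h$, and in the $c$-independent lifting, collapses into the stated bound $C\max\{c,c^{-2}\}\,H_2(\bs\beta,t\,|\,\HcurlH)$.

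The main obstacle is the verification of m-dissipativity of $\mathcal A_h$: the boundary-term identification rests crucially on the duality between $\HdivH$ and $\HcurlH$ from \cite{BuCi:1999,BuCoSh:2002} and on the Galerkin constraint being precisely the annihilator of the jump space, while the maximality step requires a well-posed coercive problem on a closed subspace of $\bff H(\curl,\mathbb R^3\setminus\Gamma)^2$ whose constants are independent of the choice of $\bs X_h$. A secondary technical point is choosing the lifting $\bs\beta^\sharp$ uniformly in time and combining it with the semigroup bounds so that the algebraic interplay between $c$ and $c^{-1}$ collapses exactly to the announced polynomial $\max\{c,c^{-2}\}$.
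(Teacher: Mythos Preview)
Your proposal is correct and follows essentially the same route as the paper: rewrite the problem as a first-order Maxwell-type system, show that the operator $\mathcal A_h$ on $D(\mathcal A_h)=\bff U_h\times\bff V_h$ is skew-adjoint (this is the paper's Proposition~4.1, with the same integration-by-parts collapse $\langle\jump{\gamma_\tau\bff H},\pi_\tau\bff E\rangle=0$ and the same coercive variational problem for surjectivity of $\mathcal I-\mathcal A_h$), lift the boundary datum, and invoke the $C_0$-group bounds.

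The one genuine difference is the lifting. You use a plain right-inverse of $\pi_\tau$, producing $\bff F=(-\dot{\bs\beta^\sharp},-c\,\nabla\times\bs\beta^\sharp)$. The paper instead builds a lifting $L(\bs\beta,\bs\xi)$ (Proposition~4.2) that solves the steady problem $\bff u=c\,\nabla\times\bff v$, $\bff v=-c\,\nabla\times\bff u$ with the transmission conditions, so that $\mathcal A\,L=L$ and the forcing becomes $F=L(\dot{\bs\beta}-\bs\beta,\dot{\bs\xi}-\bs\xi)$. Your simpler choice is perfectly adequate for well-posedness and for the regularity statements; the paper's lifting is what makes the $c$-bookkeeping transparent and yields exactly the announced factor $\max\{c,c^{-2}\}$, whereas your route will produce a possibly different polynomial in $c,c^{-1}$ (you flag this yourself as the ``secondary technical point''). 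If you want the constant to match the statement verbatim, you will end up reproducing something close to Proposition~4.2; if you only care about an $h$- and $t$-independent bound, your direct lifting suffices.
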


\begin{theorem}\label{th:2}
Let $\bff J$ and $\bff J^h$ be the respective solutions of \eqref{eq:2.4} and \eqref{eq:2.6} and let $\bff E=\bs{\mathcal S}_c*\bff J$ and $\bff E^h=\bs{\mathcal S}_c*\bff J^h$. Let $\Pi_h:\bff H^{-1/2}(\divv_\Gamma,\Gamma)\to \bs X_h$ be the orthogonal projection onto $\bs X_h$. 
If $\bff J\in \mathcal W^2_+(\bff H^{-1/2}(\divv_\Gamma,\Gamma))$, then there exists $C>0$, independent of $h$ and $t$, such that
\begin{align*}
\| \bff J(t)-\bff J^h(t)\|_{-1/2,\divv,\Gamma} + \| \bff E(t)-\bff E^h(t) & \|_{\curl,\Omega_+} \\
	&\le C\max\{c,c^{-2}\}\, H_2(\bff J-\Pi_h \bff J,t\,|\,\bff H^{-1/2}(\divv_\Gamma,\Gamma))
\end{align*}
for all $t\geq 0$.
\end{theorem}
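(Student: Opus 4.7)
The natural plan is a C\'ea-type reduction: split the error into a best-approximation part and a Galerkin correction, recognize that the latter solves a problem of the form \eqref{eq:2.6} with a modified datum, and then apply Theorem~\ref{th:1} directly. Write
\[
\bff J-\bff J^h=(\bff J-\Pi_h\bff J)+\bs e^h,\qquad \bs e^h:=\Pi_h\bff J-\bff J^h\in \bs X_h,
\]
and correspondingly $\bff E-\bff E^h=\bs{\mathcal S}_c*(\bff J-\Pi_h\bff J)+\bs{\mathcal S}_c*\bs e^h$. Testing \eqref{eq:2.4} against any $\bs\mu^h\in\bs X_h$ and subtracting \eqref{eq:2.6} yields the usual Galerkin orthogonality, so $\bs e^h$ satisfies
\[
\int_\Gamma (\bs{\mathcal V}_c*\bs e^h)\cdot\bs\mu^h\,\mathrm d\Gamma=-\int_\Gamma (\bs{\mathcal V}_c*(\bff J-\Pi_h\bff J))\cdot\bs\mu^h\,\mathrm d\Gamma\qquad\forall\bs\mu^h\in\bs X_h.
\]
This is a problem of exactly the form \eqref{eq:2.6} with datum $\widetilde{\bs\beta}:=-\bs{\mathcal V}_c*(\bff J-\Pi_h\bff J)$, and $\bs{\mathcal S}_c*\bs e^h$ is the associated semidiscrete potential.

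Applying Theorem~\ref{th:1} to $(\bs e^h,\bs{\mathcal S}_c*\bs e^h)$ with this $\widetilde{\bs\beta}$ gives
\[
\| \bs e^h(t)\|_{\HdivG}+\| (\bs{\mathcal S}_c*\bs e^h)(t)\|_{\curl,\Omega_+}\le C\max\{c,c^{-2}\}\,H_2(\bs{\mathcal V}_c*(\bff J-\Pi_h\bff J),t\,|\,\HcurlH).
\]
The remaining two pieces are controlled directly. Since $\bff J-\Pi_h\bff J$ is causal and $\Pi_h$ commutes with $\partial_t$, the fundamental theorem of calculus in $\HdivH$ gives the pointwise bound
\[
\| \bff J(t)-\Pi_h\bff J(t)\|_{\HdivG}=\Bigl\|\int_0^t \partial_t(\bff J-\Pi_h\bff J)(\tau)\,\mathrm d\tau\Bigr\|_{\HdivG}\le H_1(\bff J-\Pi_h\bff J,t\,|\,\HdivH),
\]
while $\| (\bs{\mathcal S}_c*(\bff J-\Pi_h\bff J))(t)\|_{\curl,\Omega_+}$ is dominated by a corresponding $H_k$-seminorm of $\bff J-\Pi_h\bff J$ in $\HdivH$ via the mapping properties of the retarded single layer potential $\bs{\mathcal S}_c$; these should be available as a byproduct of the same semigroup machinery that drives Theorem~\ref{th:1}, with the same $\max\{c,c^{-2}\}$ scaling.

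The crucial remaining ingredient is a continuity estimate for the forward operator itself, namely
\[
H_2(\bs{\mathcal V}_c*\bff g,t\,|\,\HcurlH)\le C\max\{c,c^{-2}\}\,H_2(\bff g,t\,|\,\HdivH)\qquad\forall \bff g\in \mathcal W^2_+(\HdivH),
\]
with $C$ independent of $t$. This is exactly the ``mapping properties of the forward operator'' result announced in the introduction, and I would treat it as an independent lemma established alongside Theorem~\ref{th:1}. Granting it, together with the triangle inequality on the three pieces of the error and the obvious bound $\|\Pi_h\|\le 1$ in $\HdivH$, the claimed estimate follows.

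The main obstacle is the last estimate itself: the kernel of $\bs{\mathcal V}_c$ contains both a time derivative and a time antiderivative, so it is not a priori clear that no loss of time regularity occurs between $\bff g$ and $\bs{\mathcal V}_c*\bff g$ at the level of the cumulative $H_2$-seminorms. Making this cancellation explicit, while simultaneously keeping the $c$-dependence uniformly pinned at $\max\{c,c^{-2}\}$, is precisely what the first-order in-time reformulation and $C_0$-semigroup calculus alluded to in the introduction are designed for; the rest of the proof of Theorem~\ref{th:2} is then routine bookkeeping on top of Theorem~\ref{th:1}.
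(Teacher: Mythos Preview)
Your C\'ea-type route has a genuine gap, precisely at the step you yourself flag. The ``forward operator'' result actually proved in the paper (Corollary~\ref{cor:2.4}) is a \emph{pointwise} bound,
\[
\|(\bs{\mathcal V}_c*\bff g)(t)\|_{\HcurlG}\le C\max\{c,c^{-2}\}\,H_2(\bff g,t\,|\,\HdivH),
\]
not the $H_2\!\to\!H_2$ bound you postulate. This reflects an honest loss of two time derivatives by $\bs{\mathcal V}_c$: from Corollary~\ref{cor:2.4}, $\bff g\in\mathcal W^2_+(\HdivH)$ only gives $\bs{\mathcal V}_c*\bff g\in\mathcal C^0_+(\HcurlH)$, so to put $\widetilde{\bs\beta}=-\bs{\mathcal V}_c*(\bff J-\Pi_h\bff J)$ into $\mathcal W^2_+(\HcurlH)$ (the hypothesis of Theorem~\ref{th:1}) you would need $\bff J-\Pi_h\bff J\in\mathcal W^4_+(\HdivH)$. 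The hoped-for cancellation between the $\partial_t$ and $\partial_t^{-1}$ pieces of $\bs{\mathcal V}_c$ does not make it order zero in time as a map $\HdivH\to\HcurlH$. Thus your argument, as written, requires two extra time derivatives on $\bff J$ and produces a constant $(\max\{c,c^{-2}\})^2$ rather than $\max\{c,c^{-2}\}$.

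The paper sidesteps this composition entirely. It sets up a single transmission problem \eqref{eq:TP} depending on \emph{two} independent data $(\bs\beta,\bs\xi)$, with solution $\bff u=\mathcal G_h*\bs\beta+\mathcal E_h*\bs\xi$. The semigroup estimate (Proposition~\ref{prop:4.7}) is proved once for this combined problem and bounds $\bff u$ and $c\,\partial^{-1}\jump{\bff u}_N$ by $H_2(\bs\beta,\cdot)+H_2(\bs\xi,\cdot)$. Theorem~\ref{th:1} is the specialization $\bs\xi=\bff 0$; Theorem~\ref{th:2} is the specialization $\bs\beta=\bff 0$, after Proposition~\ref{prop:3.4} identifies
\[
\bff E-\bff E^h=\mathcal E_h*\bff J=\mathcal E_h*(\bff J-\Pi_h\bff J),\qquad \bff J-\bff J^h=-c\,\partial^{-1}\jump{\bff E-\bff E^h}_N.
\]
No forward estimate for $\bs{\mathcal V}_c$ is invoked, no extra regularity on $\bff J$ is needed, and only one factor $\max\{c,c^{-2}\}$ appears. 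The point is that the error operator $\mathcal E_h$ is analyzed \emph{in parallel with}, not \emph{through}, the stability operator $\mathcal G_h$.
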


\begin{corollary}\label{cor:2.3}
Let $\bs \beta\in \mathcal W^2_+(\bff H^{-1/2}(\curl_\Gamma,\Gamma))$ and $\bff J$ be the unique solution of $\bs{\mathcal V}_c*\bff J=\bs\beta$. Then $\bff J\in \mathcal C^0_+(\HdivH)$, $\bs{\mathcal S}_c*\bff J\in  \mathcal C^1_+(\bff L^2(\Omega_+))\cap \mathcal C^0_+(\bff H(\curl,\Omega_+))$, and there exists $C>0$, independent of $t$, such that
\[
\| \bff J(t)\|_{\HdivG} + \| (\bs{\mathcal S}_c*\bff J)(t)\|_{\curl,\Omega_+}
\le C \max\{c,c^{-2}\}\, H_2(\bs\beta, t \,|\, \HcurlH)
\]
for all $t\ge 0$.
\end{corollary}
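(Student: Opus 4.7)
My plan is to deduce Corollary~\ref{cor:2.3} directly from Theorems~\ref{th:1} and~\ref{th:2}, exploiting the observation made just before Theorem~\ref{th:1} that neither theorem uses finite-dimensionality of $\bs X_h$, only that it is a closed subspace of $\HdivH$. I will therefore take $\bs X_h:=\HdivH$ itself, so that the orthogonal projection $\Pi_h$ reduces to the identity.

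Under this choice, the Galerkin identity \eqref{eq:2.6} tested against all of $\HdivH$ is equivalent to the pointwise identity \eqref{eq:2.4}, because $\HcurlH$ is the topological dual of $\HdivH$ with the duality bracket extending the $\bff L^2$ tangential inner product. Hence the $\bff J^h$ produced by Theorem~\ref{th:1} is a genuine solution $\bff J$ of $\bs{\mathcal V}_c*\bff J=\bs\beta$, and both the regularity claims and the quantitative bound asserted in the corollary are exactly those of Theorem~\ref{th:1} transcribed verbatim to this ambient test space. For uniqueness, given two solutions $\bff J_1,\bff J_2$, I would apply Theorem~\ref{th:2} with $\bff J:=\bff J_1$, $\bff J^h:=\bff J_2$ and the same full-space $\bs X_h$; the right-hand side $H_2(\bff J-\Pi_h\bff J,t\,|\,\HdivH)$ vanishes identically because $\Pi_h=\mathrm{Id}$, forcing $\bff J_1=\bff J_2$ for every $t\ge 0$.

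The substance of the corollary thus sits entirely inside Theorems~\ref{th:1} and~\ref{th:2}, and the step I expect to require the most care, when those proofs are written out in detail, is verifying that the constant $C$ they produce is genuinely independent of $\bs X_h$ even when $\dim\bs X_h=\infty$. Concretely, I would make sure that no step silently relies on a Riesz basis or on finite-dimensionality, but only on closedness of the subspace together with the $C_0$-semigroup framework advertised in the introduction. A minor subtlety is the compatibility of regularity hypotheses for the uniqueness step: Theorem~\ref{th:2} formally assumes $\bff J\in\mathcal W^2_+(\HdivH)$, whereas the corollary's datum hypothesis is $\bs\beta\in\mathcal W^2_+(\HcurlH)$; inspecting Theorem~\ref{th:1} one should see that the solution it constructs belongs to $\mathcal W^2_+$ whenever the datum does, making the two solutions directly comparable and closing the argument at no extra cost.
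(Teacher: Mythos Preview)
Your core argument—specializing Theorem~\ref{th:1} to $\bs X_h=\HdivH$, so that \eqref{eq:2.6} collapses to \eqref{eq:2.4}—is exactly the paper's proof (see the final paragraph of Section~4 and the ``Two particular cases'' discussion at the end of Section~3). The regularity statements and the bound in the corollary are then read off verbatim from Theorem~\ref{th:1}.

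Your added uniqueness argument, however, has a gap. Theorem~\ref{th:2} requires $\bff J\in\mathcal W^2_+(\HdivH)$, but Theorem~\ref{th:1} only delivers $\bff J\in\mathcal C^0_+(\HdivH)$; your closing claim that ``the solution it constructs belongs to $\mathcal W^2_+$ whenever the datum does'' is not supported by Theorem~\ref{th:1} as stated and would in fact require two additional derivatives of $\bs\beta$. More importantly, Theorem~\ref{th:2} already presupposes that $\bff J^h$ is \emph{the} Galerkin solution of \eqref{eq:2.6}, so invoking it with two putative solutions $\bff J_1,\bff J_2$ of \eqref{eq:2.4} is circular. In the paper, uniqueness is not extracted from Theorem~\ref{th:2} at all: it is established in the distributional framework of Section~3 (Proposition~3.2 with $\bs X_h=\HdivH$, so $\bs X_h^\circ=\{\bff 0\}$), and the phrase ``the unique solution'' in the corollary simply refers back to that. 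Drop the uniqueness digression and your proof matches the paper's.
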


\begin{corollary}\label{cor:2.4}
Let  $\bff J\in \mathcal W^2_+(\bff H^{-1/2}(\divv_\Gamma,\Gamma))$. Then $\bs{\mathcal S_c}*\bff J\in \mathcal C^1_+(\bff L^2(\Omega_+))\cap\mathcal C^0_+(\bff H(\curl,\Omega_+))$, and therefore $\bs{\mathcal V}_c*\bff J\in \mathcal C^0_+(\HcurlH))$, and there exists $C>0$, independent of $t$, such that
\[
\|(\bs{\mathcal V}_c*\bff J)(t)\|_{\HcurlG}+ \| (\bs{\mathcal S}_c*\bff J)(t)\|_{\curl,\Omega_+}
\le C\max\{c,c^{-2}\}\, H_2(\bff J,t\,|\,\bff H^{-1/2}(\divv_\Gamma,\Gamma))
\]
for all $t\ge 0$.
\end{corollary}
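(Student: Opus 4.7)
\textbf{The plan} is to obtain Corollary~\ref{cor:2.4} as the special case $\bs X_h=\{\bff 0\}$ of Theorem~\ref{th:2}. This relies on the remark preceding the main theorems that the Galerkin analysis goes through for any closed subspace of $\HdivH$, the trivial one included. Given $\bff J\in\mathcal W^2_+(\HdivH)$, I would set $\bs\beta:=\bs{\mathcal V}_c*\bff J$ using the distributional convolution framework of Section~3, so that $\bff J$ is automatically a solution of \eqref{eq:2.4} with datum $\bs\beta$. With $\bs X_h=\{\bff 0\}$, equation \eqref{eq:2.6} is solved by $\bff J^h\equiv\bff 0$ and $\Pi_h\bff J=\bff 0$, so Theorem~\ref{th:2} collapses to
\[
\|\bff J(t)\|_{\HdivG}+\|(\bs{\mathcal S}_c*\bff J)(t)\|_{\curl,\Omega_+}\le C\max\{c,c^{-2}\}\,H_2(\bff J,t\,|\,\HdivH),
\]
which is the estimate on the potential together with its $\mathcal C^0_+(\bff H(\curl,\Omega_+))$ regularity.

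For the tangential component, I would use the boundedness of $\pi_\tau^+:\bff H(\curl,\Omega_+)\to\HcurlH$ recalled in the \emph{weak tangential traces} paragraph, together with the defining identity $(\bs{\mathcal V}_c*\bff J)(t)=\pi_\tau^+(\bs{\mathcal S}_c*\bff J)(t)$ on $\Gamma$. Continuity of $\pi_\tau^+$ then yields both $\bs{\mathcal V}_c*\bff J\in\mathcal C^0_+(\HcurlH)$ and the pointwise inequality
\[
\|(\bs{\mathcal V}_c*\bff J)(t)\|_{\HcurlG}\le C\,\|(\bs{\mathcal S}_c*\bff J)(t)\|_{\curl,\Omega_+},
\]
which, added to the previous display, produces the full estimate of the corollary. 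The remaining pathwise regularity $\bs{\mathcal S}_c*\bff J\in\mathcal C^1_+(\bff L^2(\Omega_+))$ is not contained in the error bound of Theorem~\ref{th:2}; I would extract it from the $C_0$-semigroup representation of the potential built in Section~4, which realises $\bs{\mathcal S}_c*\bff J$ as a $\mathcal C^1$ trajectory of a strongly continuous group and is insensitive to the particular choice of $\bs X_h$.

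The main obstacle is legitimising the degenerate choice $\bs X_h=\{\bff 0\}$ in parallel with the apparently circular definition $\bs\beta:=\bs{\mathcal V}_c*\bff J$, i.e.\ invoking Theorem~\ref{th:2} before having independently shown that $\bs\beta$ is a datum of the class for which the continuous equation was originally posed. Both concerns are purely formal and should be resolved by the distributional calculus of Section~3: the convolution $\bs{\mathcal V}_c*\bff J$ is defined for a causal $\bff J$ without any a priori regularity requirement on the output, and the ``closed subspace'' hypothesis in Theorems~\ref{th:1}--\ref{th:2} is meant literally and therefore covers $\{\bff 0\}$. Once these bookkeeping items are in place, the rest of the proof is simply the composition of Theorem~\ref{th:2} with the standard tangential-trace inequality.
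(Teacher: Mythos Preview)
Your proposal is correct and follows exactly the route the paper takes: Corollary~\ref{cor:2.4} is obtained as the special case $\bs X_h=\{\bff 0\}$ of Theorem~\ref{th:2} (the paper even flags this degenerate choice explicitly in the ``Two particular cases'' paragraph of Section~\ref{sec:3}, where $\mathcal E_h=\bs{\mathcal S}_c$). Your additional care about the $\bs{\mathcal V}_c$ bound via the trace operator and the $\mathcal C^1_+(\bff L^2)$ regularity via Proposition~\ref{prop:4.7} is appropriate, since the one-line proof in the paper tacitly relies on that proposition rather than on the bare statement of Theorem~\ref{th:2}; your worry about circularity in defining $\bs\beta$ is unnecessary, as Theorem~\ref{th:2} takes $\bff J$ (not $\bs\beta$) as its hypothesis.
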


\section{Distributional potentials}\label{sec:3}

In this section we pave the way for the proofs of Theorems \ref{th:1} and \ref{th:2} by giving a very precise mathematical description of the problems: (a) given $\boldsymbol\beta$, compute $\mathbf E^h$ and $\mathbf J^h$, and (b) given $\mathbf J$, compute $\mathbf E^h-\mathbf E$ and $\mathbf J^h-\mathbf J$. We will handle both problems simultaneously. The theory is going to be developed with considerable latitude in the choice of the space $\bs X_h$. From now on, we will only assume that $\bs X_h$ is a  closed subspace of $\HdivH$. The two limiting cases $\bs X_h=\HdivH$ and $\bs X_h=\{ \bff 0\}$ will be discussed at the end of this section. We will use the polar set of $\bs X_h$
\[
\bs X_h^\circ:=\{ \bs\eta \in \HcurlH\,:\, \langle \bs\xi,\bs\eta\rangle=0 \quad \forall \bs\xi\in \bs X_h\}.
\]
Since $\bs X_h$ and $\bs X_h^\circ$ are closed, we can define bounded operators $\mathrm P_{\bs X_h}:\HdivH\to \HdivH$ and $\mathrm P_{\bs X_h^\circ}:\HcurlH\to \HcurlH$ such that
\begin{equation}\label{eq:3.0}
\bs\xi\in \bs X_h \quad \Longleftrightarrow \quad \mathrm P_{\bs X_h}\bs\xi=\bff 0
\qquad
\mbox{and}
\qquad
\bs\eta\in \bs X_h^\circ \quad \Longleftrightarrow \quad \mathrm P_{\bs X_h^\circ}\bs\eta=\bff 0.
\end{equation}
What these projections are is unimportant. The orthogonal projections on the orthogonal complement of the respective spaces can be taken for this role.

The transient single layer potential for Maxwell's equation can be rigorously defined using a Laplace transform. The techniques are well-known and available in the literature. We will only introduce the definitions essential for a correct handling of the potential and its tangential component on the boundary. The following theorem lays the framework for causal vector-valued distributions, relating some distributions to their Laplace transforms.

\begin{theorem}\label{th:FJS}
{\rm \cite[Chapter 3]{Sayas:2014}} Let $X$ be a Banach space and let $f$ be an $X$-valued distribution in $\mathbb R$. The following statement on $f$
\begin{quote}
there exists a continuous function $g:\mathbb R\to X$ such that $g(t)=0$ for all $t\le 0$ and such that $\| g(t)\| \le C t^m$ for all $t\ge 1$ with $m\ge 0$, and there exists a non-negative integer $k$ such that $f=g^{(k)}$
\end{quote}
is equivalent to
\begin{quote}
$f$ admits a Laplace transform $\mathrm F=\mathcal L\{ f\}$ defined in $\mathbb C_+:=\{ s\in \mathbb C\,:\, \mathrm{Re}\,s>0\}$ and satisfying $\| \mathrm F(s)\|\le C_{\mathrm F}(\mathrm{Re}\,s) |s|^\mu$ for all $s\in \mathbb C_+$, where $\mu \in \mathbb R$ and $C_{\mathrm F}:(0,\infty)\to (0,\infty)$ is non-increasing and satisfies $C_{\mathrm F}(\sigma)\le C \sigma^{-\ell}$ for all $\sigma<1$ for some $C>0$ and $\ell \ge 0$.
\end{quote}
\end{theorem}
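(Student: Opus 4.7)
The theorem is a two-sided Paley--Wiener-type characterization, and the plan is to prove the two implications separately. The implication from the time-domain statement to the Laplace-domain estimate is elementary, based on a direct Bochner integration; the reverse implication is the delicate one and requires an inverse Laplace (Bromwich) contour argument together with a scaling optimization of the contour.

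For the forward direction ($f=g^{(k)}$ with $g$ continuous, causal, and polynomially bounded implies that $\mathrm F$ satisfies the stated estimate), I would define $\mathrm G(s):=\int_0^\infty e^{-st}g(t)\,\mathrm dt$ as a Bochner integral in $X$, splitting the domain at $t=1$ in order to use the uniform bound on $[0,1]$ and the polynomial bound $\|g(t)\|\le C t^m$ on $[1,\infty)$. This yields $\|\mathrm G(s)\|\le C_g(\mathrm{Re}\,s)$ with $C_g$ non-increasing and behaving like $\sigma^{-(m+1)}$ near zero. Causality of $g$ makes all boundary contributions vanish when integrating by parts, so $\mathrm F(s)=\mathcal L\{g^{(k)}\}(s)=s^k\mathrm G(s)$, giving the required estimate with $\mu=k$ and $\ell=m+1$.

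For the reverse direction, I would pick an integer $k$ with $k-\mu\ge 2$ so that $s\mapsto s^{-k}\mathrm F(s)$ decays like $|s|^{\mu-k}$ along any vertical line in $\mathbb C_+$ and is Bochner-integrable there. I then set
\[
g(t):=\frac{1}{2\pi i}\int_{\sigma-i\infty}^{\sigma+i\infty} e^{st}\,s^{-k}\,\mathrm F(s)\,\mathrm ds,
\]
a Banach-space-valued Bromwich integral. Cauchy's theorem for holomorphic $X$-valued functions, combined with the vertical decay, shows that the value is independent of the choice $\sigma>0$; continuity of $g$ follows by dominated convergence in $t$. Causality ($g\equiv 0$ on $(-\infty,0)$) is obtained by letting $\sigma\to\infty$: since $C_{\mathrm F}$ is non-increasing and $e^{\sigma t}\to 0$ for $t<0$, the integrand tends to zero in norm uniformly. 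The identity $f=g^{(k)}$ then holds in the sense of $X$-valued distributions because both sides have the same Laplace transform $\mathrm F$ and the Laplace transform is injective on causal distributions of this class.

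The main obstacle I expect is obtaining the polynomial-in-$t$ growth bound on $g$. The strategy is to optimize the contour: for $t\ge 1$ I would set $\sigma=1/t$, so that $e^{\sigma t}=e$ and the small-$\sigma$ estimate $C_{\mathrm F}(\sigma)\le C\sigma^{-\ell}$ becomes active. After the substitution $s=\sigma(1+iv)$, the contour integral contributes a factor $\sigma^{\mu-k+1}$ times a convergent $v$-integral, leading to $\|g(t)\|\le C\,t^{\ell+k-\mu-1}$, the required polynomial bound with $m=\max\{0,\ell+k-\mu-1\}$. The trade-off between the exponential weight $e^{\sigma t}$ and the potential blow-up $C_{\mathrm F}(\sigma)\nearrow\infty$ as $\sigma\to 0^+$ is the subtle point that this scaling is designed to manage.
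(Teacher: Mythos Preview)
The paper does not supply its own proof of this theorem: it is quoted verbatim from \cite[Chapter~3]{Sayas:2014} and used as a black box to justify the distributional framework of Section~\ref{sec:3}. So there is no in-paper argument to compare against.

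That said, your plan is essentially the standard proof of this Paley--Wiener-type result and matches what one finds in the cited reference. The forward direction via $\mathrm F(s)=s^k\mathrm G(s)$ with $\mathrm G$ the Bochner Laplace integral of $g$ is routine; your split at $t=1$ correctly produces the $\sigma^{-(m+1)}$ behaviour of $C_{\mathrm F}$ near zero. For the reverse direction, the choice of an integer $k$ with $k-\mu\ge 2$ to make $s^{-k}\mathrm F(s)$ absolutely integrable on vertical lines, the Bromwich definition of $g$, contour-independence by Cauchy, causality by pushing $\sigma\to\infty$, and above all the scaling $\sigma=1/t$ to extract the polynomial bound $\|g(t)\|\le C\,t^{\ell+k-\mu-1}$ are exactly the ingredients of the reference proof. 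Two minor points worth tightening in a full write-up: (i) the causality argument also covers $t=0$ because $\sigma^{\mu-k+1}\to 0$ even though $e^{\sigma t}=1$ there; (ii) the step ``both sides have the same Laplace transform, hence $f=g^{(k)}$'' relies on injectivity of the Laplace transform on the class $\TD(X)$, which in turn rests on a density/duality argument for causal tempered distributions---you should either cite this or sketch it, since it is the one place where the vector-valued setting needs a word of care.
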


Following \cite{Sayas:2014}, the set of all causal distributions characterized by Theorem \ref{th:FJS} will be denoted $\TD(X)$ (TD as in time-domain). Note that if $f\in \TD(X)$ and $A\in \mathcal B(X,Y)$, then $Af\in \TD(Y)$. Note also that for $\mathcal W^k_+(X)\subset \TD(X)$ for all $k\ge 0$.

\paragraph{The Maxwell single layer potential.}
For brevity, we name the space
\[
\bff M:=\mathbf H(\curl,\mathbb R^3)\cap \bff H(\curl^2,\mathbb R^3\setminus\Gamma).
\]
We endow $\bff M$  with the norm
\[
\|\bff u\|_{\bff M}^2:=
	\|\bff u\|_{\mathbb R^3}^2
	+\|\nabla\times\bff u\|_{\mathbb R^3}^2
	+\|\nabla\times\nabla\times\bff u\|_{\mathbb R^3\setminus\Gamma}^2,
\]
and note that $\pi_\tau:\bff M \to \HcurlH$ and $\jump{\cdot}_N:\bff M \to \HdivH$ are bounded.
Given $\bs\xi \in \mathrm{TD}(\HdivH)$, we can find a unique
$\bff u\in \mathrm{TD}(\bff M)$
 such that
\begin{equation}\label{eq:3.1}
c^{-2}  \ddot{\bff u}+\nabla\times\nabla\times \bff u = \bff 0,
\qquad
\jump{\bff u}_N=-c^{-1}\dot{\bs\xi}.
\end{equation}
The first equation in \eqref{eq:3.1} has to be understood in the sense of $\bff L^2(\mathbb R^3\setminus\Gamma)$-valued distributions, while the second one is an equation in the sense of $\HdivH$-valued distributions. Implicit to the fact that $\bff u$ is $\bff M$-valued is the equality $\pi_\tau^-\bff u=\pi_\tau^+\bff u$. The operator that given $\bs\xi$ outputs $\bff u$ is a convolution operator and we denote it as $\bff u=\bs{\mathcal S}_c*\bs\xi$. We then denote $\bs{\mathcal V}_c*\bs\xi:=\pi_\tau \bs{\mathcal S}_c*\bs\xi$, which is a causal $\HcurlH$-valued distribution. Note that the convolution product of causal distributions is always well defined, and therefore, the convolutions $\bs{\mathcal S}_c*\bs\xi$ and $\bs{\mathcal V}_c*\bs\xi$ can be extended to arbitrary causal $\HdivH$-valued distributions $\bs\xi$. However, the relation of these operators to the transmission problem \eqref{eq:3.1} is unclear when $\bs\xi$ does not admit a Laplace transform. The existence and uniqueness of solution of \eqref{eq:3.1} is proved using the Laplace transform (see \cite{BaBaSaVe:2013} for a recent and careful exposition). We now sketch the idea with wave speed $c=1$.
Given $\bs\eta\in \HdivH$ we look for $\bff U\in \bff M$ satisfying
\begin{equation}\label{eq:3.3A}
s^2 \bff U+\nabla\times\nabla \times\bff  U=\bff 0, \qquad \jump{\bff U}_N=-s \bs\eta
\end{equation}
or, equivalently, we look for $\bff U\in \bff H(\curl,\mathbb R^3)$ satisfying
\begin{equation}\label{eq:3.3B}
s^2 (\bff U,\mathbf v)_{\mathbb R^3}+(\nabla \times \bff U,\nabla\bff v)_{\mathbb R^3}
	=-s\langle \bs\eta,\pi_\tau \bff v\rangle 
 	\qquad \forall \bff v \in \bff H(\curl,\mathbb R^3).
\end{equation}
The solution operator for \eqref{eq:3.3A} is a bounded linear operator $\mathrm S(s):\HdivH \to \bff M$. Proving a bound for $\|\mathrm S(s)\|$ in the style of the Laplace bounds of Theorem \ref{th:FJS}, we can show the existence of $\bs{\mathcal S}_c\in \TD(\mathcal B(\HdivH,\bff M))$ such that $\mathrm S(\cdot/c)=\mathcal L\{ \bs{\mathcal S_c}\}$. Then $\bs{\mathcal S}_c*\bs\xi$ is characterized by its Laplace transform $\mathrm S(s/c) \mathcal L\{\bs\xi\}(s)$. 

\paragraph{A transmission problem.} Let $\bs\beta\in \TD(\HcurlH)$ and $\bs\xi\in \TD(\HdivH)$. We look for $\bff u\in \TD(\bff M)$ satisfying
\begin{subequations}\label{eq:TP}
\begin{alignat}{6}
\label{eq:TPa}
	c^{-2} \ddot{\bff u}+\nabla\times\nabla\times \bff u = \bff 0, \\
\label{eq:TPb}
	\pi_\tau \bff u-\bs\beta \in \bs X_h^\circ,\\
\label{eq:TPc}
	\jump{\bff u}_N+c^{-1}\dot{\bs\xi} \in \bs X_h.
\end{alignat}
\end{subequations}
Let us first clarify where these equations take place. Equation \eqref{eq:TPa} is an equality of $\bff L^2(\mathbb R^3\setminus\Gamma)$-valued distributions. Equations \eqref{eq:TPb} and \eqref{eq:TPc} can be understood as equalities in the sense of distributions with values in $\HcurlH$ and $\HdivH$ respectively. In other words, we write the equivalent equations using the projectors \eqref{eq:3.0}:
\[
\mathrm P_{\bs X_h^\circ}(\pi_\tau \bff u-\bs\beta)=\bff 0,
\qquad
\mathrm P_{\bs X_h}(\jump{\bff u}_N-c^{-1}\dot{\bs\xi})=\bff 0.
\]
We remark that these are the equations for distributions of the time variable. An informal way of understanding it would be to assume that all quantities are functions of $t$ and the equations are satisfied for all $t$.

\begin{proposition}
For any $\bs\beta\in \TD(\HcurlH)$ and $\bs\xi\in \TD(\HdivH)$, problem \eqref{eq:TP} has a unique solution $\bff u\in \TD(\bff M)$.
\end{proposition}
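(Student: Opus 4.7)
The approach is to take the Laplace transform of \eqref{eq:TP}, solve the resulting $s$-parametrized problem via Lax--Milgram with $s$-dependent bounds of Laplace-admissible type, and invert back to the time domain via Theorem~\ref{th:FJS}. Writing $\bff U=\mathcal L\{\bff u\}(s)$, $\bs B=\mathcal L\{\bs\beta\}(s)$, $\bs\Xi=\mathcal L\{\bs\xi\}(s)$, the transformed system reads
\begin{equation*}
s^2c^{-2}\bff U + \nabla\times\nabla\times\bff U = \bff 0, \qquad \pi_\tau \bff U - \bs B \in \bs X_h^\circ, \qquad \jump{\bff U}_N + sc^{-1}\bs\Xi \in \bs X_h.
\end{equation*}
Since the second membership forces $\pi_\tau^- \bff U = \pi_\tau^+\bff U$, we seek $\bff U \in \bff H(\curl,\mathbb R^3)$. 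Testing against $\bff V \in V_0 := \{\bff V \in \bff H(\curl,\mathbb R^3) : \pi_\tau\bff V \in \bs X_h^\circ\}$, integrating by parts across $\Gamma$, and noting that the polar pairing forces $\langle \jump{\bff U}_N + sc^{-1}\bs\Xi, \pi_\tau\bff V\rangle = 0$, yields the variational problem
\begin{equation*}
s^2c^{-2}(\bff U,\bff V)_{\mathbb R^3} + (\nabla\times\bff U, \nabla\times\bff V)_{\mathbb R^3} = -sc^{-1}\langle \bs\Xi, \pi_\tau\bff V\rangle \qquad \forall\bff V \in V_0,
\end{equation*}
subject to $\pi_\tau\bff U - \bs B \in \bs X_h^\circ$. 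A bounded right inverse of $\pi_\tau$ supplies a lift of $\bs B$ that reduces this to a linear problem on $V_0$.

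The sesquilinear form $a_s(\bff U,\bff V) := s^2c^{-2}(\bff U,\bff V)_{\mathbb R^3} + (\nabla\times\bff U, \nabla\times\bff V)_{\mathbb R^3}$ is continuous, and
\begin{equation*}
\mathrm{Re}\bigl(\bar s\, a_s(\bff V, \bff V)\bigr) = \mathrm{Re}\,s\,\bigl(c^{-2}|s|^2\|\bff V\|_{\mathbb R^3}^2 + \|\nabla\times\bff V\|_{\mathbb R^3}^2\bigr),
\end{equation*}
so Lax--Milgram produces a unique $\bff U \in \bff H(\curl,\mathbb R^3)$ solving the variational problem. Testing against fields compactly supported in $\mathbb R^3\setminus\Gamma$ recovers the PDE (and hence $\bff U \in \bff M$); testing against general $\bff V \in V_0$, combined with the surjectivity of $\pi_\tau: V_0 \to \bs X_h^\circ$ and the bipolar identity $(\bs X_h^\circ)^\circ = \bs X_h$ (which holds because $\bs X_h$ is closed in $\HdivH$ and $\HdivH = (\HcurlH)'$), recovers the membership \eqref{eq:TPc}. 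Tracking constants yields a bound of the form $\|\bff U\|_{\bff M} \le C_{\bff F}(\mathrm{Re}\,s)|s|^\mu (\|\bs B\|_{\HcurlG} + \|\bs\Xi\|_{\HdivG})$ admissible for Theorem~\ref{th:FJS}; composed with the Laplace bounds on $\bs B, \bs\Xi$ inherited from $\bs\beta, \bs\xi \in \TD$, this delivers a unique causal $\bff u \in \TD(\bff M)$ with $\mathcal L\{\bff u\} = \bff U$, which by construction solves \eqref{eq:TP}. Uniqueness in $\TD(\bff M)$ follows because the difference of two solutions has a Laplace transform solving the homogeneous Laplace-domain problem, hence vanishes, and the Laplace transform is injective on $\TD$.

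The hardest step will be the recovery of \eqref{eq:TPc} from the variational identity: what the test space directly delivers is only an orthogonality of $\jump{\bff U}_N + sc^{-1}\bs\Xi$ against $\bs X_h^\circ$, and upgrading this to the stronger membership in $\bs X_h$ is precisely where the bipolar theorem for the closed subspace $\bs X_h \subset \HdivH$ must intervene. A secondary but quantitatively important issue is the careful bookkeeping of the $c$-dependence of the coercivity and continuity constants, since these constants will propagate into the wave-speed-dependent factor $\max\{c,c^{-2}\}$ in Theorems~\ref{th:1}--\ref{th:2}.
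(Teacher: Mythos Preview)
Your approach is essentially the same as the paper's: both take Laplace transforms, solve the coercive variational problem on $\bff H(\curl,\mathbb R^3)$ with the test space $\{\bff V:\pi_\tau\bff V\in\bs X_h^\circ\}$, derive Laplace-admissible bounds, and invert via Theorem~\ref{th:FJS}. Your write-up is in fact more explicit than the paper's sketch (bipolar recovery of \eqref{eq:TPc}, uniqueness via injectivity of the Laplace transform), and all of those steps are correct. One small correction to your closing commentary: the $\max\{c,c^{-2}\}$ factor in Theorems~\ref{th:1}--\ref{th:2} does \emph{not} arise from the Laplace-domain constants in this proposition; those theorems are proved by a separate time-domain semigroup argument (Section~4), and the present proposition is used only for the qualitative existence/uniqueness in $\TD(\bff M)$.
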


\begin{proof}
The proof follows from a simple combination of arguments in \cite{LaSa:2009b} and \cite{BaBaSaVe:2013}. We just sketch the main steps. We let $\mathrm B=\mathcal L\{ \boldsymbol\beta\}$ and $\Xi=\mathcal L\{\boldsymbol\xi\}$.  For $s\in \mathbb C_+$ we solve the coercive variational problem
\begin{subequations}\label{eq:3.4}
\begin{alignat}{6}
	&\bff U(s) \in \bff H(\curl,\mathbb R^3),\qquad \pi_\tau \bff U(s)-\mathrm B(s)\in \bs X_h^\circ,\\
	&(\nabla \times \bff U(s),\nabla \times \bff v)_{\mathbb R^3} 
		+(s/c)^2 (\bff U(s),\bff v)_{\mathbb R^3} 
		=-(s/c)\langle \Xi(s),\pi_\tau \bff v\rangle \quad \forall \bff v \in \bff H_0, 
\end{alignat}
\end{subequations}
where $\bff H_0:=\{ \bff v\in \bff H(\curl,\mathbb R^3)\,:\, \pi_\tau \bff v\in \bs X_h^\circ\}$. The solution of \eqref{eq:3.4}, parametrized in the variable for the Laplace transform $s$, is the Laplace transform of an $\bff M$-valued distribution $\bff u$ that solves \eqref{eq:TP}. Following techniques in \cite{BaBaSaVe:2013} it is simple to prove that
\[
|\tfrac{s}{c}|\| \bff U(s)\|_{\curl,\mathbb R^3}
	+\|\nabla\times\nabla\times \bff U(s)\|_{\mathbb R^3\setminus\Gamma}
	\le C(\mathrm{Re}(\tfrac{s}{c})) |\tfrac{s}{c}|^3(\|\Xi(s)\|_{-1/2,\divv,\Gamma}+\|\mathrm B(s)\|_{-1/2,\curl,\Gamma}),
\] 
where $C(\sigma)=C / (\sigma \min\{1,\sigma^2\})$ and $C$ depends only on $\Gamma$. Theorem \ref{th:FJS} can then be invoked to prove that $\bff U=\mathcal L\{\bff u\}$ where $\mathbf u\in \TD(\bff M)$ solves \eqref{eq:TP}.
\end{proof} 

The solution of \eqref{eq:3.4} can be written as
$
\bff U(s) = \mathrm G_h(s) \mathrm B(s)+\mathrm E_h(s) \Xi(s),
$
using two bounded operators $\mathrm G_h(s) \in \mathcal B(\HcurlH,\bff M)$, $\mathrm E_h(s)\in \mathcal B(\HdivH,\bff M)$. Taking the inverse Laplace transform, this formula becomes the sum of two convolutions
\begin{equation}\label{eq:3.7}
\bff u=\mathcal G_h*\bs\beta+\mathcal E_h*\bs\xi,
\end{equation}
where $\mathcal G_h\in \TD(\mathcal B(\HcurlH,\bff M))$ and $\mathcal E_h\in \TD(\mathcal B(\HdivH,\bff M)).$ By uniqueness of solution to \eqref{eq:TP}, if $\bs\xi^h\in \TD(\HdivH)$ satisfies $\bs\xi^h\in \bs X_h$, then $\mathcal E_h*\bs\xi^h=\bff 0$. Therefore
\begin{equation}\label{eq:3.8}
\mathcal E_h*\bs\xi=\mathcal E_h * (\bs\xi-\Pi_h \bs\xi).
\end{equation}
The next two results express the solution to the problems of Section \ref{sec:2} using the  convolution operators in \eqref{eq:3.7}. In particular $\bs\beta\mapsto\mathcal G_h*\bs\beta$ will be the semidiscrete EFIE solution operator (Theorem \ref{th:1}), while $\bs\xi\mapsto\mathcal E_h * \bs\xi$ will be related to the associated error operator (Theorem \ref{th:2}).

\begin{proposition}\label{prop:3.3}
Let $\bs\beta \in \TD(\HcurlH)$ and define
\[
\bff E^h=\mathcal G_h*\boldsymbol\beta,
\qquad
\bff J^h=-c\,\partial^{-1}\jump{\bff E^h}_N.
\] 
Then $\bff J^h\in \TD(\HdivH)$ and $\bff E^h\in \TD(\bff M)$ are characterized by
\begin{equation}\label{eq:3.5}
\bff J^h \in \bs X_h, 
\qquad
\bs{\mathcal V}_c*\bff J^h-\bs\beta\in \bs X_h^\circ,
\qquad
\bff E^h=\bs{\mathcal S}_c*\bff J^h.
\end{equation}
\end{proposition}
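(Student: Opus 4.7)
The plan is to observe that $\bff E^h = \mathcal G_h * \bs\beta$ is nothing other than the solution of the transmission problem \eqref{eq:TP} with zero $\bs\xi$. Indeed, writing $\mathcal G_h * \bs\beta = \mathcal G_h * \bs\beta + \mathcal E_h * \bff 0$, the decomposition \eqref{eq:3.7} shows that $\bff E^h \in \TD(\bff M)$ and that
\eqref{eq:TPa}, \eqref{eq:TPb} hold, while \eqref{eq:TPc} with $\bs\xi = \bff 0$ yields $\jump{\bff E^h}_N \in \bs X_h$ as an $\HdivH$-valued distribution. Since $\bs X_h$ is closed and causal antidifferentiation $\partial^{-1}$ is a convolution operation that preserves $\TD(\bs X_h)$, the definition $\bff J^h := -c\,\partial^{-1}\jump{\bff E^h}_N$ produces an element of $\TD(\bs X_h) \subset \TD(\HdivH)$ with $\dot{\bff J^h} = -c\,\jump{\bff E^h}_N$, equivalently $\jump{\bff E^h}_N = -c^{-1}\dot{\bff J^h}$.

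Next I would identify $\bff E^h$ with $\bs{\mathcal S}_c * \bff J^h$. Set $\bff v := \bs{\mathcal S}_c * \bff J^h \in \TD(\bff M)$. By the defining property \eqref{eq:3.1} of the single-layer potential, $c^{-2}\ddot{\bff v} + \nabla \times \nabla \times \bff v = \bff 0$ and $\jump{\bff v}_N = -c^{-1}\dot{\bff J^h} = \jump{\bff E^h}_N$. The difference $\bff E^h - \bff v$ then lies in $\TD(\bff M)$ (so its tangential trace is continuous across $\Gamma$), solves the homogeneous wave equation in $\mathbb R^3 \setminus \Gamma$, and has vanishing jump, so it vanishes by the uniqueness part of the Laplace-domain solvability of \eqref{eq:3.1}. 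The identity $\bff E^h = \bs{\mathcal S}_c * \bff J^h$ gives $\bs{\mathcal V}_c * \bff J^h = \pi_\tau \bff E^h$, whence the condition $\pi_\tau \bff E^h - \bs\beta \in \bs X_h^\circ$ inherited from \eqref{eq:TPb} becomes $\bs{\mathcal V}_c * \bff J^h - \bs\beta \in \bs X_h^\circ$, completing the three properties in \eqref{eq:3.5}.

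For the characterization, suppose a pair $(\tilde{\bff J}^h,\tilde{\bff E}^h) \in \TD(\HdivH) \times \TD(\bff M)$ satisfies \eqref{eq:3.5}. Because $\tilde{\bff E}^h = \bs{\mathcal S}_c * \tilde{\bff J}^h$, it automatically satisfies \eqref{eq:TPa} and $\jump{\tilde{\bff E}^h}_N = -c^{-1}\dot{\tilde{\bff J}^h}$; this lies in $\bs X_h$ because $\tilde{\bff J}^h \in \TD(\bs X_h)$ and $\bs X_h$ is closed under distributional time differentiation. Combined with $\pi_\tau \tilde{\bff E}^h - \bs\beta = \bs{\mathcal V}_c * \tilde{\bff J}^h - \bs\beta \in \bs X_h^\circ$, this shows that $\tilde{\bff E}^h$ solves \eqref{eq:TP} with data $(\bs\beta, \bff 0)$, so the uniqueness proved in the preceding proposition forces $\tilde{\bff E}^h = \bff E^h$. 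Causal antidifferentiation of $\dot{\tilde{\bff J}^h} = -c\jump{\tilde{\bff E}^h}_N$ (the unique causal antiderivative) then yields $\tilde{\bff J}^h = \bff J^h$.

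The only mildly delicate point is the bookkeeping around the subspace memberships: one needs closedness of $\bs X_h$ and boundedness of the projectors in \eqref{eq:3.0} to guarantee that the conditions $\bff J^h \in \bs X_h$ and $\bs{\mathcal V}_c * \bff J^h - \bs\beta \in \bs X_h^\circ$ are preserved under the distributional operations $\partial^{-1}$, $\dot{\,\cdot\,}$, $\pi_\tau$, and $\jump{\cdot}_N$. Once this is in place, everything reduces to invoking the two uniqueness statements already at our disposal—one for \eqref{eq:3.1}, the other for \eqref{eq:TP}.
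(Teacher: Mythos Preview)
Your proof is correct and follows essentially the same route as the paper: the paper's proof is a single sentence appealing to the definition of $\mathcal G_h$ (as the solution operator for \eqref{eq:TP} with $\bs\xi=\bff 0$) and to the characterization of $\bs{\mathcal S}_c$ via the transmission problem \eqref{eq:3.1}. You have simply spelled out the details that the paper leaves implicit, including the uniqueness direction of the characterization.
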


\begin{proof}
This is a straightforward consequence of the definition of $\mathcal G_h$ and of the characterization of the Maxwell single layer potential by the transmission problem \eqref{eq:3.1}. Note that the second equation in \eqref{eq:3.5} can be equivalently written as a Galerkin semidiscrete equation: $\langle \bs\mu^h,\bs{\mathcal V}_c*\bff J^h \rangle=\langle \bs\mu^h,\bs\beta\rangle$ for all $\bs\mu^h \in \bs X_h.$
\end{proof}

\begin{proposition}\label{prop:3.4}
Let $\bff J\in \TD(\HdivH)$ and define
\[
\bff E^h=\bs{\mathcal S}_c*\bff J-\mathcal E_h*\bff J,
\qquad
\bff J^h=-c\,\partial^{-1}\jump{\bff E^h}_N.
\]
Then $\bff J^h\in \TD(\HdivH)$ and $\bff E^h\in \TD(\bff M)$ are characterized by
\begin{equation} \label{eq:3.10}
\bff J^h \in \bs X_h, 
\qquad 
\bs{\mathcal V}_c*(\bff J^h-\bff J)\in \bs X_h^\circ,
\qquad
\bff E^h=\bs{\mathcal S}_c*\bff J^h.
\end{equation}
Therefore, if $\bff E=\bs{\mathcal S}_c*\bff J$, then
\begin{equation}
\bff E-\bff E^h=\mathcal E_h*\bff J=\mathcal E_h*(\bff J-\Pi_h \bff J),
\qquad
\bff J-\bff J^h=-c\,\partial^{-1}\jump{\bff E-\bff E^h}_N.
\end{equation}
\end{proposition}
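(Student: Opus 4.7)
The plan is to verify the three conditions in \eqref{eq:3.10} directly from the definitions of $\bff E^h$ and $\bff J^h$, then establish uniqueness, and finally read off the closing identities as corollaries.

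First, I would assemble the defining equations for each summand of $\bff E^h$. The single layer potential $\bs{\mathcal S}_c*\bff J$ solves \eqref{eq:3.1} with density $\bff J$, so $\jump{\bs{\mathcal S}_c*\bff J}_N=-c^{-1}\dot{\bff J}$. Setting $\bff w:=\mathcal E_h*\bff J$ and recalling from \eqref{eq:3.7} that $\bff w$ is the component of the solution to \eqref{eq:TP} corresponding to $\bs\beta=\bff 0$ and $\bs\xi=\bff J$, we read off $\pi_\tau\bff w\in \bs X_h^\circ$ and $\jump{\bff w}_N+c^{-1}\dot{\bff J}\in\bs X_h$. Subtracting, $\bff E^h$ satisfies $c^{-2}\ddot{\bff E^h}+\nabla\times\nabla\times\bff E^h=\bff 0$ and
\[
\jump{\bff E^h}_N=-c^{-1}\dot{\bff J}-\jump{\bff w}_N\in \bs X_h.
\]
Since $\bs X_h$ is a closed subspace of $\HdivH$, Bochner antidifferentiation of causal distributions preserves $\TD(\bs X_h)$, so $\bff J^h=-c\,\partial^{-1}\jump{\bff E^h}_N$ lies in $\TD(\bs X_h)\subset \TD(\HdivH)$; this is the first condition of \eqref{eq:3.10}. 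The membership $\bff E^h\in\TD(\bff M)$ is automatic from $\bs{\mathcal S}_c,\mathcal E_h\in \TD(\mathcal B(\HdivH,\bff M))$.

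Differentiating the definition of $\bff J^h$ gives $\jump{\bff E^h}_N=-c^{-1}\dot{\bff J^h}$, so $\bff E^h$ solves \eqref{eq:3.1} with density $\bff J^h$; by the uniqueness for \eqref{eq:3.1} we obtain $\bff E^h=\bs{\mathcal S}_c*\bff J^h$, the third condition. Applying $\pi_\tau$ to $\bff E^h=\bs{\mathcal S}_c*\bff J-\bff w$ yields
\[
\bs{\mathcal V}_c*\bff J^h=\pi_\tau \bff E^h=\bs{\mathcal V}_c*\bff J-\pi_\tau\bff w,
\]
and since $\pi_\tau \bff w\in \bs X_h^\circ$ we conclude $\bs{\mathcal V}_c*(\bff J^h-\bff J)\in \bs X_h^\circ$, the second condition.

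For uniqueness, suppose $(\bff J^h_1,\bff E^h_1)$ and $(\bff J^h_2,\bff E^h_2)$ both satisfy \eqref{eq:3.10}, and let $\Delta\bff J$, $\Delta\bff E$ denote the respective differences. Then $\Delta\bff J\in\bs X_h$, $\bs{\mathcal V}_c*\Delta\bff J\in \bs X_h^\circ$, and $\Delta\bff E=\bs{\mathcal S}_c*\Delta\bff J$. Reading these as the transmission problem \eqref{eq:TP} with $\bs\beta=\bff 0$ and $\bs\xi=\Delta\bff J$ gives $\Delta\bff E=\mathcal E_h*\Delta\bff J$; but $\Delta\bff J\in \bs X_h$ forces $\Pi_h\Delta\bff J=\Delta\bff J$, so \eqref{eq:3.8} yields $\Delta\bff E=\bff 0$. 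Then $\jump{\Delta\bff E}_N=\bff 0$ gives $\dot{\Delta\bff J}=\bff 0$, and causality forces $\Delta\bff J=\bff 0$.

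The closing identities are immediate: $\bff E-\bff E^h=\mathcal E_h*\bff J$ by construction, and this equals $\mathcal E_h*(\bff J-\Pi_h\bff J)$ by \eqref{eq:3.8}; the identity $\bff J-\bff J^h=-c\,\partial^{-1}\jump{\bff E-\bff E^h}_N$ follows from combining the definition of $\bff J^h$ with $\bff J=-c\,\partial^{-1}\jump{\bff E}_N$ (the latter coming from $\jump{\bs{\mathcal S}_c*\bff J}_N=-c^{-1}\dot{\bff J}$ and causality). The only delicate point is the bookkeeping when arguing that a closed-subspace-valued causal distribution stays in that subspace under antidifferentiation, which is what legitimizes concluding $\bff J^h\in \bs X_h$ from $\jump{\bff E^h}_N\in \bs X_h$; everything else is a direct consequence of the uniqueness for \eqref{eq:3.1} and \eqref{eq:TP} and the cancellation \eqref{eq:3.8}.
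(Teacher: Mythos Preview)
Your proof is correct and follows the same underlying logic as the paper: read off the transmission conditions for $\bff E^h$ from those of $\bs{\mathcal S}_c*\bff J$ and $\mathcal E_h*\bff J$, identify $\bff E^h$ with $\bs{\mathcal S}_c*\bff J^h$ via the uniqueness for \eqref{eq:3.1}, and invoke \eqref{eq:3.8} for the projection identity. The paper's own proof is a two-sentence sketch that merely observes the result is a recasting of the Galerkin equations $\langle\bs\mu^h,\bs{\mathcal V}_c*\bff J^h\rangle=\langle\bs\mu^h,\bs{\mathcal V}_c*\bff J\rangle$; you have filled in the details (including an explicit uniqueness argument via \eqref{eq:3.8}) that the paper leaves implicit.
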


\begin{proof}
This result recasts Galerkin semidiscrete equations from the point of view of the exact solution: $\langle\bs\mu^h,\bs{\mathcal V}_c*\bff J^h\rangle=\langle\bs\mu^h,\bs{\mathcal V}_c*\bff J\rangle$ for all $\bs\mu^h \in \bs X_h.$ Note that \eqref{eq:3.8} allows us to substract $\Pi_h\bff J$ in the argument of the operator of convolution by $\mathcal E_h$.
\end{proof}

\paragraph{Remark.} The convolution $\mathcal G_h*\bs\beta+\mathcal E_h*\bs\xi$ is well defined for any causal pair of distributions $(\bs\beta,\bs\xi)$ (by causal, we mean that their support is contained in $[0,\infty)$). This is due to the fact that the operator-valued distributions $\mathcal G_h$ and $\mathcal E_h$ are themselves causal. Moreover, $\bff u=\mathcal G_h*\bs\beta+\mathcal E_h*\bs\xi$ is a solution of \eqref{eq:TP}, because we can understand each of the equations in \eqref{eq:TP} as the result of applying operators to the distributions $\mathcal G_h$ and $\mathcal E_h$. What is not guaranteed is the uniqueness of solution of \eqref{eq:TP} unless we restrict the space of possible solutions. We have opted for the set $\TD(\bff M)$, which is somewhat restrictive but large enough for our purposes. Uniqueness can also be asserted in more general subspaces of the space of causal distributions, defined by the existence of a Laplace transform and some bounds on its behavior.

\paragraph{Two particular cases.} Assume that we take $\bs X_h=\HdivH$ so that $\bs X_h^\circ=\{ \bff 0\}$. Then $\mathcal E_h=0$ and we are solving the problem
\[
\bs{\mathcal V}_c*\bff J=\bs\beta, \qquad \bff u=\bs{\mathcal S}_c*\bff J,
\]
i.e., we are dealing with the non-discretized inverse, also known as the continuous stability estimate. If we take $\bs X_h=\{\bff 0\}$ instead, then $\bs X_h^\circ=\HcurlH$. In this case $\mathcal G_h=0$, $\mathcal E_h=\bs{\mathcal S}_c$, and we are dealing with
\[
\bff u=\bs{\mathcal S}_c*\bs\xi,
\]
i.e., we are handling the single layer potential.

\section{Proofs of the main results}

\paragraph{A first order system.}
Given $\bs\beta\in \TD(\HcurlH)$ and $\bs\xi\in \TD(\HdivH)$, we look for $\bff u\in \mathcal \TD (\bff M)$ and $\bff v\in \TD (\bff H(\curl;\mathbb{R}^3\backslash\Gamma))$ satisfying
\begin{subequations}\label{eqn:weak}
\begin{alignat}{6}
& \dot{\bff u} - c\nabla \times \bff v=\bff 0,&\qquad&
& \dot{\bff v} + c\nabla \times \bff u=\bff 0,\\
& \pi_\tau \bff u-\bs\beta \in \bs X_h^\circ, &\qquad &
& \jump{\gamma_\tau \bff v} - \bs \xi \in \bs X_h.
\end{alignat}
\end{subequations}
It is clear that if $(\bff u,\bff v)$ solves \eqref{eqn:weak}, then $\bff u$ solves \eqref{eq:TP}.

\paragraph{A strong version of the first order system.} Most of this section will consist of the analysis of a problem related to \eqref{eqn:weak}, but written in $\mathbb R_+:=[0,\infty)$, using classical time derivatives and vanishing initial conditions. Our data are now smooth enough functions $\bs\beta:[0,\infty)\to \HcurlH$ and $\bs\xi:[0,\infty)\to \HdivH$, and look for 
\begin{subequations}\label{eqn:strong}
\begin{eqnarray} 
\bff u & \in & \mathcal C(\mathbb{R}_+;\bff H(\curl;\mathbb{R}^3))\cap \mathcal C^1(\mathbb{R}_+;\mathbf{L}^2(\mathbb{R}^3\backslash\Gamma)), \label{eq:4.2a}
\\
\bff v & \in &  \mathcal C(\mathbb{R}_+;\bff H(\curl;\mathbb{R}^3\backslash\Gamma)) \cap \mathcal C^1(\mathbb{R}_+;\mathbf{L}^2(\mathbb{R}^3)), \label{eq:4.2b}
\end{eqnarray}
such that for all $t\ge 0$
\begin{alignat}{3}
 \label{eq:4.2c}
& \dot{\bff u}(t) - c\nabla \times \bff v(t)=\bff 0,
&\qquad &
 \dot{\bff v}(t) + c\nabla \times \bff u(t)=\bff 0, \\
 \label{eq:4.2d}
& \pi_\tau \bff u(t)-\bs\beta(t) \in \bs X_h^\circ ,
&\qquad &
\jump{\gamma_\tau \bff v(t)} - \bs \xi (t) \in \bs X_h.
\end{alignat}
and
\begin{equation}
\bff u(0)=\bff 0,\quad \bff v(0)=\bff 0.
\end{equation}
\end{subequations}

\paragraph{An unbounded operator.}
Consider $\mathcal H:=\bff L^2(\mathbb{R}^3\backslash\Gamma )\times \bff L^2(\mathbb{R}^3)$, equipped with its natural norm, and the spaces
\begin{align*}
\bff U_h &:= \{ \bff u\in \bff H(\curl,\mathbb{R}^3) : \pi_\tau \bff u\in \bs X_h^\circ \}, \\
\bff V_h & :=\{ \bff v\in \bff H(\curl,\mathbb{R}^3\backslash\Gamma ) : \jump{\gamma_\tau \bff v}\in \bs X_h\}.
\end{align*}
In the domain $D(\mathcal A)  :=\bff U_h \times \bff V_h$, we define the operator $\mathcal A:D(\mathcal A)\to\mathcal H$ given by
$\mathcal A(\bff u,\bff v):=(c\nabla\times\bff v,-c\nabla\times\bff u)$.

\begin{proposition}\label{prop:4.1}
The operator $\mathcal A:D(\mathcal A)\to \mathcal H$ is the infinitesimal generator of a unitary $C_0$-group of operators in $\mathcal H$.
\end{proposition}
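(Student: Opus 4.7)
The plan is to invoke Stone's theorem, which in this real Hilbert space setting reduces to applying the Lumer--Phillips theorem to both $\mathcal A$ and $-\mathcal A$. The three ingredients I need are: (i) density of $D(\mathcal A)$ in $\mathcal H$, which is immediate because pairs of smooth vector fields compactly supported in $\mathbb R^3\setminus\Gamma$ have vanishing tangential traces on both sides of $\Gamma$ and therefore lie in $\bff U_h\times\bff V_h$ regardless of $\bs X_h$; (ii) skew-symmetry of $\mathcal A$ on $D(\mathcal A)$, so that both $\pm\mathcal A$ are dissipative; and (iii) the range condition $\mathrm{Range}(\lambda I\pm\mathcal A)=\mathcal H$ for some $\lambda>0$. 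Once these hold, the two $C_0$-semigroups of contractions generated by $\pm\mathcal A$ assemble into a $C_0$-group of isometries on $\mathcal H$.

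For skew-symmetry, I apply the integration-by-parts formula from Section~2, extended to $\bff H(\curl,\mathbb R^3\setminus\Gamma)$. Since $\bff u,\bff u'\in\bff H(\curl,\mathbb R^3)$ have continuous $\pi_\tau$-trace across $\Gamma$, for $(\bff u,\bff v),(\bff u',\bff v')\in D(\mathcal A)$ the formula collapses to
\[
(\nabla\times\bff v,\bff u')_{\mathbb R^3\setminus\Gamma}-(\bff v,\nabla\times\bff u')_{\mathbb R^3\setminus\Gamma}=-\langle\jump{\gamma_\tau\bff v},\pi_\tau\bff u'\rangle,
\]
whose right-hand side vanishes because $\jump{\gamma_\tau\bff v}\in\bs X_h$ is paired with $\pi_\tau\bff u'\in\bs X_h^\circ$; the symmetric identity holds for $(\nabla\times\bff u,\bff v')$. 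A short computation then gives $(\mathcal A(\bff u,\bff v),(\bff u',\bff v'))_{\mathcal H}=-((\bff u,\bff v),\mathcal A(\bff u',\bff v'))_{\mathcal H}$, so both $\pm\mathcal A$ are dissipative.

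The main obstacle is the range condition. Fix $\lambda>0$ and $(\bff f_1,\bff f_2)\in\mathcal H$; formally eliminating $\bff v=\lambda^{-1}(\bff f_2-c\nabla\times\bff u)$ reduces $(\lambda I-\mathcal A)(\bff u,\bff v)=(\bff f_1,\bff f_2)$ to the symmetric coercive problem
\[
\lambda(\bff u,\bff u')_{\mathbb R^3}+c^2\lambda^{-1}(\nabla\times\bff u,\nabla\times\bff u')_{\mathbb R^3}=(\bff f_1,\bff u')_{\mathbb R^3\setminus\Gamma}+c\lambda^{-1}(\bff f_2,\nabla\times\bff u')_{\mathbb R^3},\qquad\forall\bff u'\in\bff U_h,
\]
uniquely solvable by Lax--Milgram on the Hilbert space $\bff U_h$. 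Defining $\bff v:=\lambda^{-1}(\bff f_2-c\nabla\times\bff u)\in\bff L^2(\mathbb R^3)$ and testing first against smooth $\bff u'$ supported in $\mathbb R^3\setminus\Gamma$ yields $c\nabla\times\bff v=\lambda\bff u-\bff f_1$ in $\mathcal D'(\mathbb R^3\setminus\Gamma)$, placing $\bff v\in\bff H(\curl,\mathbb R^3\setminus\Gamma)$ and recovering the first coordinate of the equation. Integrating by parts the variational identity for a general $\bff u'\in\bff U_h$ and substituting $c\nabla\times\bff v=\lambda\bff u-\bff f_1$ cancels every bulk term, leaving $\langle\jump{\gamma_\tau\bff v},\pi_\tau\bff u'\rangle=0$ for all $\bff u'\in\bff U_h$. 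Since $\pi_\tau:\bff H(\curl,\mathbb R^3)\to\HcurlH$ is surjective, its restriction to $\bff U_h$ has image exactly $\bs X_h^\circ$; the bipolar theorem in the nondegenerate duality $\langle\HdivH,\HcurlH\rangle$, together with the closedness of $\bs X_h$, then delivers $\jump{\gamma_\tau\bff v}\in(\bs X_h^\circ)^\circ=\bs X_h$, so $\bff v\in\bff V_h$ and $(\bff u,\bff v)\in D(\mathcal A)$. The identical argument handles $\lambda I+\mathcal A$, and Lumer--Phillips for both $\pm\mathcal A$ concludes.
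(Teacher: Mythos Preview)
Your proof is correct and follows essentially the same route as the paper: both arguments establish that $\pm\mathcal A$ are maximal dissipative by showing skew-symmetry via the integration-by-parts identity and the pairing of $\bs X_h$ with $\bs X_h^\circ$, and then verifying the range condition by solving the same coercive variational problem in $\bff U_h$, defining $\bff v$ from $\bff u$, testing first with compactly supported fields to recover $\nabla\times\bff v\in\bff L^2$, and finally using $\pi_\tau(\bff U_h)=\bs X_h^\circ$ together with $(\bs X_h^\circ)^\circ=\bs X_h$ to place $\bff v$ in $\bff V_h$. The only differences are cosmetic: you work with a general $\lambda>0$ instead of $\lambda=1$, explicitly mention density of $D(\mathcal A)$, and name the bipolar theorem where the paper leaves it implicit.
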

\begin{proof}
According to \cite[Chapter 1, Theorem 4.3]{Pazy:1983} or \cite[Theorem 4.5.4]{Kesavan:1989}, we only need to prove that $\pm \mathcal A$ are maximal dissipative. We first prove that
\[
(\mathcal A W, W)_{\mathcal H} = 0 \quad \forall W \in D(\mathcal A).
\]
For all $W=(\bff u,\bff v)\in \bff U_h\times \bff V_h$, we can write
\begin{align*}
(\mathcal AW,W)_{\mathcal H} &= \left( (c\nabla\times \bff v,-c\nabla \times \bff u),(\bff u,\bff v)\right)_{\mathcal H} \\
&= (c\nabla\times \bff v, \bff u)_{\mathbb{R}^3\backslash\Gamma}-(c\nabla\times \bff u,\bff v)_{\mathbb{R}^3} = -c\langle \jump{\gamma_\tau \bff v},\pi_\tau \bff u\rangle =0,
\end{align*}
since $\jump{\gamma_\tau \bff v}\in \bs X_h$ and $\pi_\tau \bff u\in \bs X_h^\circ$.

Let $\mathcal I:D(\mathcal A)\to \mathcal H$ be the canonical inclusion of $D(\mathcal A)$ into $\mathcal H$. We now prove that $\mathcal I-\mathcal A:D(\mathcal A)\to \mathcal H$ is surjective, i.e., given $\bff f\in \bff L^2(\mathbb{R}^3\backslash\Gamma),\bff g\in \bff L^2(\mathbb{R}^3)$, there exists $(\bff u,\bff v)\in D(\mathcal A)$ satisfying
\begin{equation}\label{eqn:inhomogeneous}
\bff u - c\nabla\times \bff v = \bff f,
\qquad
\bff v + c\nabla\times \bff u = \bff g.
\end{equation}
To prove this, we solve the following coercive variational problem
\begin{equation}
\label{eqn:4.4}
\bff u\in \bff U_h,
\qquad
(\bff u,\bff w)_{\mathbb R^3}+c^2(\nabla\times \bff u,\nabla\times \bff w)_{\mathbb R^3}=
(\bff f,\bff w)_{\mathbb R^3}+c(\bff g,\nabla\times \bff w)_{\mathbb R^3}\quad \forall \bff w\in \bff U_h
\end{equation}
and then define
$
\bff v = \bff g-c\nabla\times \bff u\in \bff L^2(\mathbb{R}^3).
$
We claim that $(\bff u,\bff v) \in D(\mathcal A)$ and that \eqref{eqn:inhomogeneous} is satisfied. To prove this, we first choose an arbitrary $\bff z\in (\mathcal D(\mathbb{R}^3\backslash\Gamma))^3\subset \bff U_h$ as a test function in \eqref{eqn:4.4} and show that
\begin{align*}
\langle \nabla\times \bff v,\bff z\rangle_{(\mathcal{D}'(\mathbb{R}^3\backslash\Gamma))^3\times (\mathcal{D}(\mathbb{R}^3\backslash\Gamma))^3} &= (\bff v,\nabla\times \bff z)_{\mathbb{R}^3} \\
&=(\bff g-c\nabla\times \bff u,\nabla\times \bff z)_{\mathbb{R}^3} = c^{-1}(\bff u-\bff f,\bff z)_{\mathbb{R}^3}.
\end{align*}
This implies $\nabla\times \bff v=c^{-1}(\bff u-\bff f)\in \bff L^2(\mathbb{R}^3\backslash\Gamma)$, and therefore $\bff v\in \bff H(\curl,\mathbb{R}^3\backslash\Gamma)$. For $\bff w\in \bff U_h$, we have
\begin{align*}
-\langle \jump{\gamma_\tau \bff v},\pi_\tau \bff w\rangle &= (\nabla \times \bff v,\bff w)_{\mathbb{R}^3\backslash\Gamma}-(\bff v,\nabla\times \bff w)_{\mathbb{R}^3} \\
&=c^{-1}(\bff u-\bff f,\bff w)_{\mathbb R^3}-(\bff g-c\nabla\times \bff u,\nabla\times \bff w)_{\mathbb R^3}=0.
\end{align*}
The observation $\pi_\tau\bff U_h=\bs X_h^\circ$ leads to
\[
\langle \jump{\gamma_\tau \bff v},\bs \zeta \rangle=0\quad \forall \bs \zeta\in \bs X_h^\circ,
\]
and therefore $\jump{\gamma_\tau \bff v}\in \bs X_h$, which proves that $\bff v\in \bff V_h$. The surjectivity of $\mathcal I-\mathcal A$ and dissipativity imply maximal dissipativity of $\mathcal A$.

To prove the $\mathcal I+\mathcal A$ is surjective, we solve a similar variational problem
\[
\bff u\in \bff U_h,
\qquad
(\bff u,\bff w)_{\mathbb R^3}+c^2(\nabla\times \bff u,\nabla\times \bff w)_{\mathbb R^3}
=(\bff f,\bff w)_{\mathbb R^3}-c(\bff g,\nabla\times \bff w)_{\mathbb R^3}\quad \forall \bff w\in \bff U_h
\]
and then define $ \bff v = \bff g+c\nabla\times \bff u$. The rest of the analysis is a minor variation of the previous case.
\end{proof}

\paragraph{Lifting operator.} The next step is the construction of a lifting operator that will eliminate the non-homogeneous transmission conditions \eqref{eq:4.2d}.

\begin{proposition}\label{prop:4.2}
Given  $\bs\beta \in \HcurlH,\bs\xi\in \HdivH$, there exists a unique pair $(\bff u,\bff v)\in  \bff H(\curl,\mathbb{R}^3)\times \bff H(\curl,\mathbb{R}^3\backslash\Gamma)$ satisfying
\begin{subequations}\label{eq:4.6}
\begin{alignat}{6}
& \bff u = c\nabla\times \bff v, & \qquad & 
\bff v = -c\nabla \times \bff u,\\
& \pi_\tau \bff u-\bs\beta \in \bs X_h^\circ, & &
\jump{\gamma_\tau \bff v} - \bs\xi \in \bs X_h.
\end{alignat}
\end{subequations}
Moreover, there exists $C_\Gamma >0$, independent of the choice of $\bs X_h$, such that
\begin{equation}\label{eq:4.6b}
\| \bff u\|_{\curl,\mathbb{R}^3} + \| \bff v\|_{\curl,\mathbb{R}^3\backslash\Gamma} \leq C_\Gamma \max\{c,c^{-1}\} ( \| \bs\beta\|_{\HcurlG} +\| \bs\xi\|_{\HdivG} )
\end{equation}
\end{proposition}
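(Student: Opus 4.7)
The plan is to reduce \eqref{eq:4.6} to a coercive variational problem on $\bff U_h$, closely paralleling the surjectivity argument used for $\mathcal I - \mathcal A$ in the proof of Proposition \ref{prop:4.1}. First, I would invoke the surjectivity of $\pi_\tau: \bff H(\curl, \mathbb R^3) \to \HcurlH$ to lift $\bs\beta$ to $\widetilde{\bff u}_\beta \in \bff H(\curl, \mathbb R^3)$ with $\pi_\tau \widetilde{\bff u}_\beta = \bs\beta$ and $\|\widetilde{\bff u}_\beta\|_{\curl,\mathbb R^3} \le C_\Gamma \|\bs\beta\|_{-1/2,\curl,\Gamma}$. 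Writing $\bff u = \bff u_0 + \widetilde{\bff u}_\beta$ with $\bff u_0 \in \bff U_h$, the natural variational formulation
\[
(\bff u_0, \bff w)_{\mathbb R^3} + c^2 (\nabla \times \bff u_0, \nabla \times \bff w)_{\mathbb R^3} = -(\widetilde{\bff u}_\beta, \bff w)_{\mathbb R^3} - c^2 (\nabla \times \widetilde{\bff u}_\beta, \nabla \times \bff w)_{\mathbb R^3} - c \langle \bs\xi, \pi_\tau \bff w\rangle \quad \forall \bff w \in \bff U_h
\]
is coercive on $\bff U_h$, so Lax-Milgram supplies a unique $\bff u_0$. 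I would then set $\bff v := -c \nabla \times \bff u$.

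To recover the strong form of \eqref{eq:4.6}, I would first test with $\bff w \in (\mathcal D(\mathbb R^3 \setminus \Gamma))^3 \subset \bff U_h$, obtaining $\bff u + c^2 \nabla \times \nabla \times \bff u = \bff 0$ on $\mathbb R^3 \setminus \Gamma$, which both places $\bff v$ in $\bff H(\curl, \mathbb R^3 \setminus \Gamma)$ and confirms $c \nabla \times \bff v = \bff u$. Then, for arbitrary $\bff w \in \bff U_h$, integration by parts using the jump formula of Section 2 and comparison with the variational identity yields $\langle \jump{\gamma_\tau \bff v} - \bs\xi, \pi_\tau \bff w\rangle = 0$. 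Since $\pi_\tau \bff U_h = \bs X_h^\circ$ (the same fact exploited at the end of the proof of Proposition \ref{prop:4.1}) and the bipolar theorem gives $(\bs X_h^\circ)^\circ = \bs X_h$, this delivers $\jump{\gamma_\tau \bff v} - \bs\xi \in \bs X_h$. Uniqueness is immediate: in the homogeneous case, $\bff u \in \bff U_h$ can be used as its own test function to force $\|\bff u\|^2 + c^2 \|\nabla \times \bff u\|^2 = 0$.

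For the sharp stability bound, the naive route through Lax-Milgram in the standard $\curl$ norm produces only $\max\{c^2, c^{-2}\}$, so I would work instead in the $c$-weighted energy norm $\|\bff w\|_*^2 := \|\bff w\|^2 + c^2 \|\nabla \times \bff w\|^2$, in which the bilinear form is both coercive and continuous with constant $1$. A single Lax-Milgram step then yields $\|\bff u_0\|_* \lesssim \max\{1, c\} \, C_\Gamma (\|\bs\beta\|_{-1/2,\curl,\Gamma} + \|\bs\xi\|_{-1/2,\divv,\Gamma})$, after using $c\|\pi_\tau \bff w\|_{-1/2,\curl,\Gamma} \le C_\Gamma \max\{1,c\} \|\bff w\|_*$ and $\|\widetilde{\bff u}_\beta\|_* \le \max\{1,c\} \|\widetilde{\bff u}_\beta\|_{\curl}$. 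Converting back via $\|\bff u\|_{\curl,\mathbb R^3} \le \max\{1, c^{-1}\} \|\bff u\|_*$ together with $\|\bff v\|_{\curl,\mathbb R^3\setminus\Gamma}^2 = c^2\|\nabla \times \bff u\|^2 + c^{-2}\|\bff u\|^2 \le \max\{1, c^{-2}\} \|\bff u\|_*^2$, the identity $\max\{1,c\}\max\{1,c^{-1}\} = \max\{c, c^{-1}\}$ yields exactly the stated constant.

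The main obstacle is precisely this last sharpening. It is tempting to estimate everything in the standard $\bff H(\curl)$ norm, in which the ratio of continuity to coercivity degrades by $\max\{1,c^2\}/\min\{1,c^2\}$; using the $c$-weighted norm neutralizes this discrepancy. Everything else is a direct reprise, with non-homogeneous data, of the maximality construction already carried out in Proposition \ref{prop:4.1}.
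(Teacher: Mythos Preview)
Your proposal is correct and follows essentially the same strategy as the paper: lift the non-homogeneous trace data, reduce to the coercive variational problem on $\bff U_h$ from Proposition~\ref{prop:4.1}, recover $\bff v$ and the transmission condition via integration by parts and $\pi_\tau\bff U_h=\bs X_h^\circ$, and extract the bound using the $c$-weighted energy norm $\|\cdot\|_*$.

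The only notable organisational difference is the treatment of $\bs\xi$. The paper lifts \emph{both} data, choosing $\bff v^{\bs\xi}\in\bff H(\curl,\mathbb R^3\setminus\Gamma)$ with $\jump{\gamma_\tau\bff v^{\bs\xi}}=\bs\xi$, and then invokes the already-established surjectivity of $\mathcal I-\mathcal A$ with right-hand side $(\bff f,\bff g)=(c\nabla\times\bff v^{\bs\xi}-\bff u^{\bs\beta},\,-c\nabla\times\bff u^{\bs\beta}-\bff v^{\bs\xi})$. You instead keep $\bs\xi$ as the boundary functional $-c\langle\bs\xi,\pi_\tau\bff w\rangle$ and solve the variational problem directly. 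A short computation shows these two right-hand sides coincide (the $\bff v^{\bs\xi}$ terms collapse to $-c\langle\bs\xi,\pi_\tau\bff w\rangle$ after one integration by parts), so you are solving literally the same variational problem. Your route is marginally more direct and avoids a second trace lifting; the paper's route reuses Proposition~\ref{prop:4.1} more explicitly. Either way, testing with $\bff u_0$ in the weighted norm gives $\|\bff u_0\|_*\lesssim\max\{1,c\}(\|\bs\beta\|+\|\bs\xi\|)$, and then $\max\{1,c\}\max\{1,c^{-1}\}=\max\{c,c^{-1}\}$ finishes the job, exactly as you outline.
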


\begin{proof}
Since $\pi_\tau :\bff H(\curl,\mathbb{R}^3)\to \HcurlH$ is bounded and surjective, we can use a bounded right-inverse to build $\bff{u}^{\bs\beta} \in \bff H(\curl,\mathbb{R}^3)$ such that
\begin{equation}\label{eq:4.7}
\pi_\tau \bff{u}^{\bs\beta} = \bs\beta,
\qquad 
\| \bff{u}^{\bs\beta}\|_{\curl,\mathbb{R}^3} \leq C_1 \| \bs \beta\|_{\HcurlG}.
\end{equation}
Similarly, since $\jump{\gamma_\tau} : \bff H(\curl,\mathbb{R}^3\setminus\Gamma)\to \HdivH$ is bounded and onto, we can choose $\bff{v}^{\bs\xi} \in \bff H(\curl,\mathbb{R}^3\setminus\Gamma)$ satisfying
\begin{equation}\label{eq:4.8}
\jump{\gamma_\tau \bff{v}^{\bs\xi}} = \bs\xi,
\qquad 
\| \bff{v}^{\bs\xi}\|_{\curl,\mathbb{R}^3\setminus\Gamma} \leq C_2 \| \bs \xi\|_{\HdivG}.
\end{equation}
In the proof of Proposition \ref{prop:4.1}, we have shown that $\mathcal I-\mathcal A$ is surjective. Therefore, we can find $(\bff u^0,\bff v^0)\in D(\mathcal A)$ satisfying
\begin{subequations}\label{eq:4.10}
\begin{alignat}{6}
\bff u^0-c\nabla\times\bff v^0 & = \bff f:=c\nabla\times\bff v^{\bs\xi}-\bff u^{\bs\beta},\\
\bff v^0+c\nabla\times \bff u^0 &=\bff g:=-c\nabla\times \bff u^{\bs\beta}-\bff v^{\bs\xi}.
\end{alignat}
\end{subequations}
It is then obvious that $(\bff u,\bff v)=(\bff u^{\bs\beta}+\bff u^0,\bff v^{\bs\xi}+\bff v^0)$ is a solution of \eqref{eq:4.6}. Moreover, proceeding as in the proof of Proposition \ref{prop:4.1}, it is evident that $\bff u^0$ is the unique solution of \eqref{eqn:4.4}. Choosing $\bff w=\bff u^0$ as test function in \eqref{eqn:4.4} and using \eqref{eq:4.7}-\eqref{eq:4.8}, we can bound
\begin{align*}
\min \{1,c\} \| \bff u^0\|_{\curl,\mathbb{R}^3}
&\leq \sqrt{\| \bff u^0\|_{\mathbb{R}^3}^2 + c^2\| \nabla\times \bff u^0\|_{\mathbb{R}^3}^2}
\leq \sqrt{\| \bff f\|_{\mathbb{R}^3\setminus\Gamma}^2 + \| \bff g\|_{\mathbb{R}^3}^2} \\
&\leq \sqrt{2}\max\{1,c\}\max\{C_1,C_2\} (\| \bs \beta\|_{\HcurlG}+\| \bs \xi\|_{\HdivG}).
\end{align*}
Using now \eqref{eq:4.10}, we can bound
\begin{alignat*}{6}
\| \bff v^0\|_{\curl,\mathbb{R}^3\setminus\Gamma} 
& \leq \| \bff v^0\|_{\mathbb{R}^3} + \| \nabla\times \bff v^0\|_{\mathbb{R}^3 \setminus\Gamma}
\leq \| \bff g-c\nabla\times\bff u^0\|_{\mathbb{R}^3} + \| c^{-1}(\bff u^0 -\bff f) \|_{\mathbb{R}^3\setminus\Gamma} \\
&\leq \sqrt{2}\max\{c^{-1},1\}\sqrt{\| \bff u^0\|_{\mathbb{R}^3}^2+c^2 \| \nabla \times \bff u^0\|_{\mathbb{R}^3}^2} + c^{-1}\| \bff f\|_{\mathbb{R}^3\setminus\Gamma}+\| \bff g\|_{\mathbb{R}^3} \\
& \leq 2\max \{c,c^{-1}\} \max \{ C_1,C_2\} (\| \bs \beta\|_{\HcurlG}+\| \bs \xi\|_{\HdivG}) \\
&\hspace{1.5in} +\sqrt{2}\max \{ c,c^{-1}\} C_1 \| \bs\beta\|_{\HcurlG} + \sqrt{2}C_2 \| \bs\xi\|_{\HdivG}.
\end{alignat*}
Hence, the estimate \eqref{eq:4.6b} follows readily. Note that this estimate proves uniqueness of solution of \eqref{eq:4.6}.
\end{proof}

We are now ready to use the previous arguments to prove existence and uniqueness of solution to the non-homogeneous transmission problem \eqref{eqn:strong}. We will need the spaces 
\[
\mathcal C^k_0(\mathbb R_+;X):=\{ f\in \mathcal C^k(\mathbb R_+;X)\,:\, f^{(j)}(0)=0, \quad j\le k-1\}.
\]
and the following auxiliary result from the theory of evolutionary equations on Banach spaces (See \cite[Chapter 4, Corollary 2.5]{Pazy:1983}).

\begin{theorem}
\label{thm:semigroup}
Let $\mathcal A:D(\mathcal A)\subset \mathcal H\to\mathcal H$ be the infinitesimal generator of a contractive $C_0$-semigroup of operators in $\mathcal H$ and let $F\in \mathcal C_0^1(\mathbb R_+;\mathcal H)$. The initial value problem
\[
\dot U(t) = \mathcal A U(t) +F(t)\quad \forall t\geq 0, 
\qquad U(0)=0,
\]
has a unique classical solution
$
U\in \mathcal C^1(\mathbb R_+;\mathcal H)\cap \mathcal C(\mathbb R_+;D(\mathcal A))
$
and we can bound
\[
\| U(t)\|_{\mathcal H} \leq \int_0^t \| F(\tau )\|_{\mathcal H}\,\mathrm{d}\tau, 
\qquad
\| \dot U(t)\|_{\mathcal H} \leq \int_0^t \| \dot F(\tau )\|_{\mathcal H} \,\mathrm{d}\tau, 
\qquad
\forall t\ge 0.
\]
\end{theorem}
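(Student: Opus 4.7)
The standard approach is the variation of constants (Duhamel) formula. Let $\{T(t)\}_{t\ge 0}$ denote the contractive $C_0$-semigroup generated by $\mathcal A$, so $\|T(t)\|\le 1$ for all $t\ge 0$. I would define the candidate solution
\[
U(t) := \int_0^t T(t-\tau) F(\tau)\,\mathrm d\tau,
\]
where the integral is a Bochner integral in $\mathcal H$. Continuity of $U$ in $\mathcal H$ follows from strong continuity of the semigroup together with continuity of $F$, and the contractivity bound $\|T(t-\tau)\|\le 1$ gives the first estimate $\|U(t)\|_{\mathcal H}\le \int_0^t\|F(\tau)\|_{\mathcal H}\,\mathrm d\tau$ directly.

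For the derivative estimate, I would change variables to $\sigma=t-\tau$ to obtain $U(t)=\int_0^t T(\sigma)F(t-\sigma)\,\mathrm d\sigma$, and then differentiate under the integral sign, which is legitimate because $F\in \mathcal C^1$. This yields
\[
\dot U(t) = T(t)F(0) + \int_0^t T(\sigma)\dot F(t-\sigma)\,\mathrm d\sigma.
\]
The hypothesis $F\in\mathcal C^1_0(\mathbb R_+;\mathcal H)$ means $F(0)=0$, so the boundary term drops and a reverse change of variables produces $\dot U(t)=\int_0^t T(t-\tau)\dot F(\tau)\,\mathrm d\tau$, whence contractivity gives $\|\dot U(t)\|_{\mathcal H}\le\int_0^t \|\dot F(\tau)\|_{\mathcal H}\,\mathrm d\tau$. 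This also shows $\dot U\in\mathcal C(\mathbb R_+;\mathcal H)$.

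The main obstacle, as always in the inhomogeneous Cauchy problem, is proving that $U(t)\in D(\mathcal A)$ and that $\mathcal A U(t)=\dot U(t)-F(t)$. The standard way is to compute, for $h>0$, the difference quotient
\[
\frac{T(h)-\mathcal I}{h}U(t) = \frac{1}{h}\int_0^t T(t+h-\tau)F(\tau)\,\mathrm d\tau - \frac{1}{h}\int_0^t T(t-\tau)F(\tau)\,\mathrm d\tau,
\]
and rewrite the first integral by a shift of variables so that the $F(0)=0$ condition cancels the boundary contribution; one then identifies the limit as $h\to 0^+$ with $\dot U(t)-F(t)$, using the expression for $\dot U$ obtained in the previous step. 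This shows $U(t)\in D(\mathcal A)$, with $\tau\mapsto\mathcal A U(\tau)$ continuous because it equals $\dot U-F$, giving $U\in\mathcal C(\mathbb R_+;D(\mathcal A))$ (with the graph norm). Uniqueness is routine: any other classical solution $\widetilde U$ would make $W=U-\widetilde U$ a classical solution of $\dot W=\mathcal A W$, $W(0)=0$, and contractivity of the semigroup forces $W\equiv 0$. Since this theorem is simply the cited Corollary 2.5 of Chapter 4 in Pazy, a full proof is not needed in the paper; the sketch above is what I would include if forced to make the exposition self-contained.
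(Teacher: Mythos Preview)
Your proposal is correct and is exactly the standard Duhamel argument underlying the cited reference; the paper itself does not prove this theorem but simply quotes it from \cite[Chapter~4, Corollary~2.5]{Pazy:1983}, as you note in your final sentence. Nothing further is needed.
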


\begin{proposition} \label{prop:4.4}
For all $\bs\beta\in \mathcal W_+^2(\HcurlH)$ and $\bs\xi\in \mathcal W_+^2(\HdivH)$, problem \eqref{eqn:strong} with data $(\bs\beta|_{\mathbb{R}_+},\bs\xi|_{\mathbb{R}_+})$ has a unique solution and for all $t\geq 0$,
\begin{align*}
\| \bff u(t)\|_{\curl,\mathbb{R}^3} \leq 4C_\Gamma\max\{c,c^{-2}\} \left( H_2(\bs\beta,t\,|\,\HcurlH)+ H_2(\bs\xi,t\,|\,\HdivH) \right), \\
\| \bff v(t)\|_{\curl,\mathbb{R}^3\backslash\Gamma } \leq 4C_\Gamma\max\{c,c^{-2}\} \left( H_2(\bs\beta,t\,|\,\HcurlH)+ H_2(\bs\xi,t\,|\,\HdivH) \right),
\end{align*}
where $C_\Gamma$ is the constant of Proposition \ref{prop:4.2}.
\end{proposition}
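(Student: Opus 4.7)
}
The strategy is to reduce \eqref{eqn:strong} to an inhomogeneous abstract ODE governed by the generator $\mathcal A$ of Proposition \ref{prop:4.1} by lifting away the transmission conditions using Proposition \ref{prop:4.2}, and then invoke Theorem \ref{thm:semigroup}. Concretely, for each $t\ge 0$ I apply Proposition \ref{prop:4.2} to the data pair $(\bs\beta(t),\bs\xi(t))$ to obtain a lifting $(\bff u_L(t),\bff v_L(t))\in \bff H(\curl,\mathbb R^3)\times \bff H(\curl,\mathbb R^3\setminus\Gamma)$ satisfying $\bff u_L=c\nabla\times\bff v_L$, $\bff v_L=-c\nabla\times\bff u_L$, and the inhomogeneous transmission conditions, with
\[
\| \bff u_L(t)\|_{\curl,\mathbb R^3}+\| \bff v_L(t)\|_{\curl,\mathbb R^3\setminus\Gamma}\le C_\Gamma\max\{c,c^{-1}\}\bigl(\| \bs\beta(t)\|_{\HcurlG}+\| \bs\xi(t)\|_{\HdivG}\bigr).
\]
Because Proposition \ref{prop:4.2} gives a \emph{linear} solution operator, $\bff u_L$ and $\bff v_L$ inherit the time regularity $\mathcal W_+^2$ from the data, and both vanish at $t=0$ together with their first derivatives.

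Next, I write $(\bff u,\bff v)=(\bff u_0+\bff u_L,\bff v_0+\bff v_L)$ and use the identities $\bff u_L=c\nabla\times\bff v_L$, $\bff v_L=-c\nabla\times\bff u_L$ to rewrite \eqref{eq:4.2c} as
\[
\dot{\bff u}_0-c\nabla\times\bff v_0=\bff u_L-\dot{\bff u}_L,\qquad \dot{\bff v}_0+c\nabla\times\bff u_0=\bff v_L-\dot{\bff v}_L.
\]
The homogeneous transmission conditions \eqref{eq:4.2d} now read $(\bff u_0(t),\bff v_0(t))\in D(\mathcal A)$, and the initial conditions vanish. Setting $U_0=(\bff u_0,\bff v_0)$ and $F=(\bff u_L-\dot{\bff u}_L,\bff v_L-\dot{\bff v}_L)$, I obtain the abstract Cauchy problem $\dot U_0=\mathcal A U_0+F$ with $U_0(0)=0$, to which Theorem \ref{thm:semigroup} applies since $F\in \mathcal C_0^1(\mathbb R_+;\mathcal H)$ (with $\dot F$ derivative involving at most $\ddot{\bs\beta}$, $\ddot{\bs\xi}$, which lie in $L^1$; a standard approximation argument by $\mathcal C^2$ data covers the technical gap if needed).

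Theorem \ref{thm:semigroup} then yields the $\mathcal H$-bounds
\[
\| U_0(t)\|_{\mathcal H}\le \int_0^t\| F(\tau)\|_{\mathcal H}\,\mathrm d\tau,\qquad \| \dot U_0(t)\|_{\mathcal H}\le \int_0^t\| \dot F(\tau)\|_{\mathcal H}\,\mathrm d\tau,
\]
and substituting the lifting estimate bounds the right-hand sides by $C_\Gamma\max\{c,c^{-1}\}$ times the sum $H_2(\bs\beta,t\,|\,\HcurlH)+H_2(\bs\xi,t\,|\,\HdivH)$. The $\bff L^2$-parts of $\bff u,\bff v$ are then controlled directly by $\|U_0\|_{\mathcal H}+\|(\bff u_L,\bff v_L)\|_{\mathcal H}$. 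To recover the $\curl$-norms I use the equations \eqref{eq:4.2c} themselves: $\nabla\times\bff v=c^{-1}\dot{\bff u}=c^{-1}(\dot{\bff u}_0+\dot{\bff u}_L)$ and $\nabla\times\bff u=-c^{-1}\dot{\bff v}$, so the $\curl$-norm bounds are controlled by $c^{-1}\| \dot U_0(t)\|_{\mathcal H}$ plus the $\curl$-norm of the lifting, adding one extra factor of $c^{-1}$.

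The main bookkeeping obstacle is tracking the powers of $c$ to land exactly at the stated $\max\{c,c^{-2}\}$: the $\bff L^2$-part already picks up the lifting factor $\max\{c,c^{-1}\}$, while the $\curl$-part combines the lifting factor $\max\{c,c^{-1}\}$ with the extra $c^{-1}$ coming from solving for the curl in \eqref{eq:4.2c}, producing $\max\{1,c^{-2}\}$. Taking the maximum of $\max\{c,c^{-1}\}$ and $\max\{1,c^{-2}\}$ gives precisely $\max\{c,c^{-2}\}$ in both regimes $c\ge 1$ and $c<1$, and collecting constants yields the factor $4C_\Gamma$. Uniqueness is inherited from the uniqueness of the lifting (Proposition \ref{prop:4.2}) and the uniqueness given by Theorem \ref{thm:semigroup}.
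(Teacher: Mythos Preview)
Your proposal is correct and follows essentially the same route as the paper: lift the transmission data via Proposition \ref{prop:4.2}, reduce to the abstract Cauchy problem $\dot U_0=\mathcal A U_0+F$ with $F$ built from the lifting and its time derivative, apply Theorem \ref{thm:semigroup} for the $\mathcal H$-bounds on $U_0$ and $\dot U_0$, recover the $\curl$-norms through the relations $\nabla\times\bff u=-c^{-1}\dot{\bff v}$ and $\nabla\times\bff v=c^{-1}\dot{\bff u}$, and close with a density argument for $\mathcal W_+^2$ data. The paper's proof is organized in exactly this way (with the same constant tracking yielding $4C_\Gamma\max\{c,c^{-2}\}$), so there is nothing to add.
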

\begin{proof}
We first prove the result with slightly smoother data: $\bs\beta\in \mathcal C_0^2(\mathbb{R}_+;\HcurlH)$ and $\bs\xi\in \mathcal C_0^2(\mathbb{R}_+;\HdivH)$.
Let $L:\HcurlH\times \HdivH\to  \bff H(\curl,\mathbb{R}^3)\times \bff H(\curl,\mathbb{R}^3\backslash\Gamma)$ be the lifting operator defined by Proposition \ref{prop:4.2}. We then define $W_\TC (t) = L( \bs\beta(t),\bs\xi(t))$ for all $t\ge 0$, and consider the function
$F=\dot{W}_\TC - W_\TC=L(\dot{\bs\beta}-\bs\beta,\dot{\bs\xi}-\bs\xi)$. The hypotheses on $\bs\beta$ and $\bs\xi$ imply that $W_\TC \in \mathcal C_0^2(\mathbb{R}_+;\bff H(\curl;\mathbb{R}^3)\times \bff H(\curl;\mathbb{R}^3\backslash\Gamma))$ and therefore $F\in \mathcal C_0^1(\mathbb{R}_+;\mathcal H)$. At this moment, we use Theorem \ref{thm:semigroup} and Proposition \ref{prop:4.1} to define the unique solution $W_0\in \mathcal C^1(\mathbb{R}_+;\mathcal H)\cap \mathcal C(\mathbb{R}_+;D(\mathcal A))$ of
\begin{equation}\label{eq:4.11}
\dot{W}_0(t) = \mathcal A W_0(t) + F(t),\quad t\geq 0,
\qquad
W_0(0) =0.
\end{equation}
It is then easy to show that $(\bff u,\bff v)=W_{\mathrm{TC}}+W_0$ is the unique solution of \eqref{eqn:strong}. (Uniqueness follows from uniqueness of solution to \eqref{eq:4.11}.)

Using Theorem \ref{thm:semigroup}, we estimate
\begin{align*}
\| W_0(t)\|_{\mathcal H} &\leq \int_0^t \| \dot{W}_\TC (\tau )- W_\TC (\tau )\|_{\mathcal H}\,\mathrm{d} \tau \\
& \leq C_\Gamma\max\{c,c^{-1}\} \left( H_1(\bs\beta,t ; \HcurlH) + H_1(\bs\xi,t ; \HdivH) \right).
\end{align*}
Using the bounds for the lifting operator again, we next bound
\begin{align*}
\| W(t) \|_{\mathcal H} &\leq \| W_\TC (t) \|_{\mathcal H} + \| W_0 (t)\|_{\mathcal H} \\
&\leq 2C_\Gamma\max\{c,c^{-1}\} \left( H_1(\bs\beta,t ; \HcurlH) + H_1(\bs\xi,t ; \HdivH) \right).
\end{align*}
Analogously, we obtain
\begin{align*}
\| \dot{W}(t) \|_{\mathcal H} &\leq \| \dot{W}_\TC (t) \|_{\mathcal H} + \| \dot{W}_0 (t)\|_{\mathcal H} \\
&\leq 2C_\Gamma\max\{c,c^{-1}\} \left( H_1(\dot{\bs\beta},t ; \HcurlH) + H_1(\dot{\bs\xi},t ; \HdivH) \right).
\end{align*}
The bounds in the statement of the Proposition follow then from the identities
\[
\| \bff u(t)\|_{\curl,\mathbb{R}^3}^2 = \| \bff u(t)\|_{\mathbb{R}^3\backslash\Gamma}^2 + \| c^{-1}\dot{\bff v}(t)\|_{\mathbb{R}^3}^2, \qquad
\| \bff v(t)\|_{\curl,\mathbb{R}^3\backslash\Gamma}^2 = \| \bff v(t)\|_{\mathbb{R}^3}^2 + \| c^{-1}\dot{\bff u}(t)\|_{\mathbb{R}^3\backslash\Gamma}^2.
\]
To prove the result for the weaker hypotheses in the statement we just need to use a simple density argument. Alternatively, we can use a variant of Theorem \ref{thm:semigroup} where $F\in \mathcal C(\mathbb R_+;\mathcal H)$, $F(0)=0$ and $\dot F$ is integrable.
\end{proof}


\paragraph{Extension operator.} Consider a continuous function $f:[0,\infty)\to X$ and
\[ 
(Ef)(t):= \begin{cases}
f(t), & t\geq 0,\\
0,& t<0, \end{cases} 
\]
which is a causal $X$-valued distribution. If $\|f(t)\|\le C (1+t^m)$, then $Ef\in \TD(X)$. 

\begin{proposition} \label{prop:4.6}
Let $\bs\beta\in \mathcal W_+^2(\HcurlH),\bs\xi\in \mathcal W_+^2(\HdivH)$ and let $(\bff u,\bff v)$ be the unique solution of \eqref{eqn:strong} with data $(\bs\beta |_{\mathbb{R}_+},\bs\xi |_{\mathbb{R}_+})$. Then $(E\bff u,E\bff v)$ is the solution of \eqref{eqn:weak} with $(\bs \beta,\bs \xi)$ as data. Therefore, $E\bff u=\mathcal G_h*\bs\beta+\mathcal E_h*\bs\xi$.
\end{proposition}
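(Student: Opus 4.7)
The plan is to verify that the causal extensions $(E\bff u, E\bff v)$ satisfy the distributional system \eqref{eqn:weak}, and then to extract the stated representation from the uniqueness of solution of \eqref{eq:TP} in $\TD(\bff M)$ proved in Section \ref{sec:3}.

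First I would record that $E\bff u$ and $E\bff v$ are causal distributions of the required type. Proposition \ref{prop:4.4} bounds $\|\bff u(t)\|_{\curl,\mathbb R^3}$ and $\|\bff v(t)\|_{\curl,\mathbb R^3\setminus\Gamma}$ in terms of the cumulative seminorms $H_2(\cdot,t\,|\,\cdot)$, which grow at most polynomially in $t$ when the data belong to $\mathcal W_+^2$. Combined with the continuity in \eqref{eq:4.2a}--\eqref{eq:4.2b} and the initial conditions $\bff u(0) = \bff v(0) = \bff 0$, this gives continuity of $E\bff u$ and $E\bff v$ across $t = 0$, so by the characterization in Theorem \ref{th:FJS} one obtains $E\bff u \in \TD(\bff H(\curl,\mathbb R^3))$ and $E\bff v \in \TD(\bff H(\curl,\mathbb R^3\setminus\Gamma))$.

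The crucial observation is that the vanishing initial conditions let distributional time differentiation commute with the extension operator: for any $f \in \mathcal C^1(\mathbb R_+; X)$ with $f(0) = 0$ and $\dot f$ locally integrable, the derivative of $Ef$ in the sense of $X$-valued distributions equals $E\dot f$, since the formal Dirac jump term at the origin carries coefficient $f(0) = 0$. Applied to $\bff u$ and $\bff v$, together with the fact that the spatial operators $\nabla \times$, $\pi_\tau$ and $\jump{\gamma_\tau \cdot}$ commute with $E$ as they act only in space, the pointwise identities \eqref{eq:4.2c} extend to the first pair of equations in \eqref{eqn:weak}. For the transmission conditions, the inclusions \eqref{eq:4.2d} hold pointwise for $t \geq 0$, and for $t < 0$ we have $E\bff u(t) = E\bff v(t) = \bff 0$ together with $\bs\beta(t) = \bs\xi(t) = \bff 0$ (since the data belong to $\mathcal W_+^2$), so the inclusions trivially persist. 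Consequently $\mathrm P_{\bs X_h^\circ}(\pi_\tau E\bff u - \bs\beta) = 0$ and $\mathrm P_{\bs X_h}(\jump{\gamma_\tau E\bff v} - \bs\xi) = 0$ in the distributional sense.

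Finally, as already noted after \eqref{eqn:weak}, the fact that $(E\bff u, E\bff v)$ solves \eqref{eqn:weak} immediately implies that $E\bff u$ solves \eqref{eq:TP}; combining the two first-order equations yields in particular $\nabla \times \nabla \times E\bff u = -c^{-2} \ddot{(E\bff u)} \in \TD(\bff L^2(\mathbb R^3\setminus\Gamma))$, so $E\bff u \in \TD(\bff M)$. The uniqueness of solution of \eqref{eq:TP} in $\TD(\bff M)$, together with the fact that $\mathcal G_h * \bs\beta + \mathcal E_h * \bs\xi$ is by construction a solution of \eqref{eq:TP} (see \eqref{eq:3.7} and the Remark preceding this proof), yields the identification $E\bff u = \mathcal G_h * \bs\beta + \mathcal E_h * \bs\xi$. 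The main technical point, and really the only one with any subtlety, is the handling of distributional differentiation across $t = 0$, which is precisely where the vanishing initial conditions from \eqref{eqn:strong} do the work.
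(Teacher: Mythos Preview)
Your proof is correct and follows essentially the same line as the paper's: polynomial growth from Proposition~\ref{prop:4.4}, commutation of distributional differentiation with $E$ thanks to vanishing initial values, and uniqueness in $\TD(\bff M)$. The only notable difference is in how you reach $E\bff u\in\TD(\bff M)$: the paper argues directly that $\partial^{-1}\bff u$ is continuous into $\bff M$ (using $\nabla\times\partial^{-1}\bff u=-c^{-1}\bff v$), whereas you deduce $\nabla\times\nabla\times E\bff u=-c^{-2}\ddot{(E\bff u)}\in\TD(\bff L^2(\mathbb R^3\setminus\Gamma))$ and combine this with $E\bff u\in\TD(\bff H(\curl,\mathbb R^3))$. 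Your route is fine, but the inference ``so $E\bff u\in\TD(\bff M)$'' deserves one more sentence: one should note that antidifferentiating enough times produces a single continuous causal function with polynomial growth simultaneously in $\bff H(\curl,\mathbb R^3)$ and in $\bff H(\curl^2,\mathbb R^3\setminus\Gamma)$, hence in $\bff M$---which is exactly the paper's $E\partial^{-1}\bff u$.
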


\begin{proof}
We first note that the hypotheses on $\bs\beta$ and $\bs\xi$ imply that 
\[
H_2(\bs\beta,t\,|\,\HcurlH)+ H_2(\bs\xi,t\,|\,\HdivH)\le C(1+t^2)
\]
for some $C>0$. 
It is now clear from the above and Proposition \ref{prop:4.4} that $E\bff v\in \TD (\bff H(\curl;\mathbb{R}^3\backslash\Gamma))$. It follows from rom \eqref{eq:4.2a} that
\[
\partial^{-1} \bff u \in \mathcal C(\mathbb{R}_+;\bff H(\curl^2;\mathbb{R}^3\backslash\Gamma)) \cap \mathcal C^1(\mathbb{R}_+;\bff H(\curl;\mathbb{R}^3))\cap \mathcal C^2(\mathbb{R}_+;\mathbf{L}^2(\mathbb{R}^3\backslash\Gamma)).
\]
The fact that there exists some other $C>0$ such that
\[
\| \partial^{-1} \bff u(t)\|_{\bff M} \leq \int_0^t \| \bff u(\tau )\|_{\curl,\mathbb{R}^3}\,\mathrm{d}\tau + \| c^{-1} \bff v(t )\|_{\curl,\mathbb{R}^3} \leq C(1+t^3)
\]
implies $E\partial^{-1} \bff u\in \TD (\bff M)$ and therefore $E\bff u\in \TD (\bff M)$.
Since $\bff u(0)=\bff 0$ and $\bff v(0)=\bff 0$, we have
\begin{align*}
{d\over dt}(E\bff u) &=E\dot{\bff u}=E(c\nabla\times \bff v)=\nabla\times (cE\bff v), \\
{d\over dt}(E\bff v) &=E\dot{\bff v}=-E(c\nabla\times \bff u)=-\nabla\times (cE\bff u).
\end{align*}
The transmission boundary conditions can be interpreted in the distributional sense naturally.
\end{proof}

\begin{proposition} \label{prop:4.7}
Let $\bs\beta\in \mathcal W_+^2(\HcurlH),\bs\xi\in \mathcal W_+^2(\HdivH)$ and $\bff u$ be the solution to \eqref{eq:TP},i.e., $\bff u=\mathcal G_h * \bs \beta+\mathcal E_h * \bs\xi$. Then $\bff u\in \mathcal C_+^0(\bff H(\curl;\mathbb{R}^3))\cap \mathcal C_+^1(\bff L^2(\mathbb{R}^3\setminus\Gamma)),\partial^{-1}\nabla\times \bff u\in \mathcal C_+^0(\bff H(\curl;\mathbb{R}^3\backslash\Gamma)),\partial^{-1} \jump{\bff u}_N\in \mathcal C_+^0(\HdivH)$ and there exists a constant $C_\Gamma'>0$ independent of the choice of $\bs X_h$ such that
\begin{align*}
\| & \bff u(t)\|_{\curl,\mathbb{R}^3} + \| c\,\partial^{-1} \nabla \times \bff u(t)\|_{\curl,\mathbb{R}^3\backslash\Gamma } + \| c\,\partial^{-1} \jump{\bff u}_N (t)\|_{\HdivG }\\
&\leq C_\Gamma'\max \{c,c^{-2}\} \left( H_2(\bs\beta,t\,|\,\HcurlH)+ H_2(\bs\xi,t\,|\,\HdivH) \right) \quad \forall t\geq 0.
\end{align*}
\end{proposition}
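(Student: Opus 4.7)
My plan is to reduce everything to the strong first order system \eqref{eqn:strong} and then read off all the quantities of interest from $(\bff u,\bff v)$ by using the equations themselves. First I would invoke Proposition \ref{prop:4.6} to identify the distributional solution $\bff u = \mathcal G_h*\bs\beta + \mathcal E_h*\bs\xi$ of \eqref{eq:TP} with the causal extension $E\tilde{\bff u}$ of the classical solution $(\tilde{\bff u},\tilde{\bff v})$ of \eqref{eqn:strong}, whose existence, regularity, and \emph{quantitative} bounds are provided by Proposition \ref{prop:4.4}. This lets me evaluate the pointwise norms appearing in the conclusion directly on $(\tilde{\bff u},\tilde{\bff v})$ and drop the tilde henceforth.

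Next I would exploit the first order structure. Using \eqref{eq:4.2c} together with the vanishing initial conditions $\bff u(0)=\bff v(0)=\bff 0$, integration in time (a classical Bochner integral of continuous $X$-valued maps) gives the identities
\begin{equation*}
c\,\partial^{-1}\nabla\times\bff u = -\bff v,
\qquad
c\,\partial^{-1}\jump{\bff u}_N = c\,\partial^{-1}\jump{\gamma_\tau(\nabla\times \bff u)} = -\jump{\gamma_\tau \bff v},
\end{equation*}
where the second equality follows from $\nabla\times\bff u=-c^{-1}\dot{\bff v}$ and the fact that the jump operator $\jump{\gamma_\tau\cdot}$ commutes with time differentiation and integration for continuous $\bff H(\curl;\mathbb R^3\setminus\Gamma)$-valued maps vanishing at $t=0$. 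Consequently,
\begin{equation*}
\| c\,\partial^{-1}\nabla\times \bff u(t)\|_{\curl,\mathbb R^3\setminus\Gamma} = \|\bff v(t)\|_{\curl,\mathbb R^3\setminus\Gamma},
\qquad
\| c\,\partial^{-1}\jump{\bff u}_N(t)\|_{\HdivG} \le C_\Gamma''\,\|\bff v(t)\|_{\curl,\mathbb R^3\setminus\Gamma},
\end{equation*}
with $C_\Gamma''$ the norm of the bounded jump trace $\jump{\gamma_\tau\cdot}:\bff H(\curl,\mathbb R^3\setminus\Gamma)\to\HdivH$.

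Then I would add the three contributions and apply the bounds of Proposition \ref{prop:4.4} to $\|\bff u(t)\|_{\curl,\mathbb R^3}$ and $\|\bff v(t)\|_{\curl,\mathbb R^3\setminus\Gamma}$, collecting the constants into a single $C_\Gamma'$ depending only on $\Gamma$. The factor $\max\{c,c^{-2}\}$ is already present in Proposition \ref{prop:4.4} and is preserved by the identifications above. For the continuity statements, $\bff u\in \mathcal C^0_+(\bff H(\curl,\mathbb R^3))\cap \mathcal C^1_+(\bff L^2(\mathbb R^3\setminus\Gamma))$ is immediate from \eqref{eq:4.2a} together with $\bff u(0)=\bff 0$; the displayed identities then transfer the continuity of $\bff v$ to $\partial^{-1}\nabla\times\bff u$ and of $\jump{\gamma_\tau\bff v}$ (via the bounded trace) to $\partial^{-1}\jump{\bff u}_N$, giving the remaining memberships in $\mathcal C^0_+$.

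I do not expect a serious obstacle: this proposition is essentially a bookkeeping consequence of Propositions \ref{prop:4.4} and \ref{prop:4.6}. The only subtlety is the legitimate interchange of the time antiderivative $\partial^{-1}$ (originally defined in the distributional sense of Section \ref{sec:3}) with the classical Bochner integral on $[0,\infty)$, and the analogous interchange with the trace operator; both are justified because Proposition \ref{prop:4.6} places us in the setting where the distribution is the causal extension of a continuous $X$-valued function vanishing at $t=0$, so all objects can be handled pointwise.
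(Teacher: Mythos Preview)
Your proposal is correct and follows essentially the same approach as the paper: identify the distributional solution with the strong one via Proposition~\ref{prop:4.6}, use the first order equations to recognize $\bff v=-c\,\partial^{-1}\nabla\times\bff u$ (and hence $c\,\partial^{-1}\jump{\bff u}_N=-\jump{\gamma_\tau\bff v}$), and then read off the bounds from Proposition~\ref{prop:4.4} together with the boundedness of the jump trace $\jump{\gamma_\tau\cdot}$. The paper's proof is simply a terser rendition of exactly these steps.
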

\begin{proof}
Defining $\bff v=-c\,\partial^{-1}\nabla\times \bff u$, $(\bff u,\bff v)$ is the solution to \eqref{eqn:weak}. By Proposition \ref{prop:4.6} and uniqueness of solution of \eqref{eqn:weak}, $(\bff u|_{\mathbb{R}_+},\bff v|_{\mathbb{R}_+})$ is the solution to \eqref{eqn:strong} with data $(\bs\beta|_{\mathbb{R}_+},\bs\xi|_{\mathbb{R}_+})$ and we have estimates as in Proposition \ref{prop:4.4}. The bound for $\partial^{-1}\nabla\times \bff u$ follows from $\bff v=-c\,\partial^{-1} \nabla\times \bff u$. The bound for $\partial^{-1}\jump{\bff u}_N$ results from the boundedness of the operator $\jump{\gamma_\tau}:\bff H(\curl,\mathbb{R}^3\setminus\Gamma)\to \HcurlH$.
\end{proof}

\paragraph{Proofs of all the results of Section \ref{sec:2}.}
By Proposition \ref{prop:3.3}, $\bff E^h$ and $\bff J^h$ in Theorem \ref{th:1} can be represented as $\bff E^h =\mathcal G_h * \bs\beta, \bff J^h = -c\,\partial^{-1}\jump{\bff E^h}$. Their bounds and regularity are a consequence of Proposition \ref{prop:4.7}  with $\bs\xi=\bff 0$. Corollary \ref{cor:2.3} is a special case of Theorem \ref{th:1} when $\bs X_h^\circ = \{ \bff 0\}$. 
Similarly, the bounds for $\bff E-\bff E^h =\mathcal E_h* \bff J$ and $\bff J-\bff J^h =-c\,\partial^{-1}\jump{\bff E-\bff E^h}$ in Theorem \ref{th:2} follows from Proposition \ref{prop:4.7} with $\bs\beta=\bff 0$ and Proposition \ref{prop:3.4}. Corollary \ref{cor:2.4} is a special case of Theorem \ref{th:2} when $\bs X_h = \{ \bff 0 \}$.

\section{Comparison with existing results}

In this short section we compare our results with those of \cite{BaBaSaVe:2013} (which develop the techniques in \cite{Terrasse:1993}) and \cite{Rynne:1999}.

The following two results are consequences of estimates in  \cite{BaBaSaVe:2013}. On the one hand, \cite[Theorem 4.4(a)]{BaBaSaVe:2013} is a frequency domain result about $\mathcal V^{-1}$(also in \cite[Lemma 2]{LiMoWe:2015}). We convert it to a time domain result using \cite[Theorem 7.1]{DoSa:2013}.

\begin{theorem} \label{prop:5.1}
If we solve $\bs{\mathcal V}_c*\bff J = \bs\beta$ when $\bs\beta \in \mathcal W_+^4(\HcurlH)$, then \\ $\bff J\in \mathcal C_+^0(\HdivH)$ and there is a constant $C$ independent of $t$ such that for arbitrary $t\geq 0$,
\[
\| \bff J(t) \|_{\HdivG} \leq C{t^2\over 1+t}\max\{ 1,t^2\} \int_0^t \| \bs\beta^{(4)} (\tau)\|_{\HdivG} \,\mathrm{d}\tau.
\]
\end{theorem}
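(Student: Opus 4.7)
The plan is to combine two existing results from the literature: the Laplace-domain estimate for the inverse of the Maxwell single-layer operator $\mathrm V(s)=\mathcal L\{\bs{\mathcal V}_1\}(s)$ given in \cite[Theorem 4.4(a)]{BaBaSaVe:2013}, and the Laplace-to-time transfer result \cite[Theorem 7.1]{DoSa:2013}. The latter is the standard machine used in that line of work to convert $s$-explicit operator bounds in $\mathbb C_+$ into time-domain stability estimates involving cumulative integrals of derivatives of the data.

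First, I would recall (or re-extract) from \cite[Theorem 4.4(a)]{BaBaSaVe:2013} the bound for $\|\mathrm V(s)^{-1}\|_{\HcurlH\to\HdivH}$, which takes the form $C\,|s|^\mu/(\operatorname{Re} s)^{\alpha}$ after suitably collecting powers of $s$ and $\operatorname{Re} s$. Once this bound is in hand, Theorem~\ref{th:FJS} together with the hypothesis $\bs\beta\in\mathcal W_+^4(\HcurlH)\subset\TD(\HcurlH)$ already ensures that $\bff J=\mathcal L^{-1}\{\mathrm V(s/c)^{-1}\mathrm B(s)\}$ is a well-defined element of $\TD(\HdivH)$; the continuity $\bff J\in\mathcal C_+^0(\HdivH)$ then follows from the embedding $\mathcal W_+^1\subset\mathcal C_+^0$ applied to the distribution obtained by absorbing a suitable number of derivatives into the data.

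Second, I would apply \cite[Theorem 7.1]{DoSa:2013} verbatim: given the specific exponents $(\mu,\alpha)$ read off from the first step, that theorem outputs the bound
\[
\|\bff J(t)\|_{\HdivG}\le C\,P(t)\int_0^t \|\bs\beta^{(k)}(\tau)\|_{\HcurlH}\,\mathrm d\tau,
\]
where the number $k$ of time derivatives and the polynomial factor $P(t)$ depend in a prescribed way on $(\mu,\alpha)$. Matching the exponents from \cite[Theorem 4.4(a)]{BaBaSaVe:2013} with the formula in \cite[Theorem 7.1]{DoSa:2013} yields precisely $k=4$ and $P(t)=t^2(1+t)^{-1}\max\{1,t^2\}$, which is the stated inequality.

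The proof is therefore a direct composition, and the only real obstacle is the bookkeeping of exponents: one must carefully track the powers of $|s|$ and the negative powers of $\operatorname{Re} s$ throughout, since the transfer theorem is sensitive to both (the former dictates how many derivatives of $\bs\beta$ must be integrated, while the latter produces the $t^2(1+t)^{-1}\max\{1,t^2\}$ polynomial weight). No additional analytical work is needed beyond verifying this alignment; in particular, the $c$-dependence is not tracked here because the comparison in Section~5 is only to be made for $c=1$, following the convention of \cite{BaBaSaVe:2013}.
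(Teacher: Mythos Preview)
Your proposal is correct and follows exactly the approach the paper itself uses: the paper does not give a detailed proof but states explicitly that Theorem~\ref{prop:5.1} is obtained by taking the frequency-domain bound for $\mathrm V(s)^{-1}$ from \cite[Theorem 4.4(a)]{BaBaSaVe:2013} and converting it to the time domain via \cite[Theorem 7.1]{DoSa:2013}. Your added discussion of the exponent bookkeeping and the role of Theorem~\ref{th:FJS} simply makes explicit what the paper leaves implicit.
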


The representation formula for the error of Galerkin semidiscretization \cite[Formula (43)]{BaBaSaVe:2013}, the discrete stability estimate \cite[Lemma 4.8]{BaBaSaVe:2013} and the boundary integral operator bound \cite[Formula (32)]{BaBaSaVe:2013} can be used to provide a time domain bound \cite[Formula (44)]{BaBaSaVe:2013} without explicitly displaying the constant's dependence on time. With the same estimates and  \cite[Theorem 7.1]{DoSa:2013} we can prove the following theorem, which admits a better comparison with our results.

\begin{theorem} \label{prop:5.2}
Assume $\Omega_-$ is a polyhedron. Choose $\bs X_h$ to be the lowest order Raviart-Thomas elements on a surface mesh over the polyhedron. Let $\bff J$ and $\bff J^h$ be the solutions to \eqref{eq:2.4} and \eqref{eq:2.6}. If $\bff J\in \mathcal W_+^6(\HdivH)$, then there is a constant $C$ independent of $t$ such that for all $t\geq 0$,
\[
\| \bff J(t)-\bff J^h(t)\|_{\HdivG} \leq C{t^3\over 1+t}\max \{1,t^4\} \int_0^t \| (\bff J-\Pi_h \bff J)^{(6)}(\tau )\|_{\HdivG}\,\mathrm{d}\tau.
\]
\end{theorem}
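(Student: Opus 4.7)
The theorem is essentially a repackaging of three results from \cite{BaBaSaVe:2013} and \cite{DoSa:2013}, arranged to make a direct comparison with Theorem~\ref{th:2}. The plan is to stay in the Laplace domain until the very last step, where we invoke a transfer theorem to land in the time domain.

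First, I would start from the representation formula for the semidiscrete error. Proposition~\ref{prop:3.4} yields $\bff J-\bff J^h=-c\,\partial^{-1}\jump{\mathcal E_h*(\bff J-\Pi_h\bff J)}_N$. Taking Laplace transforms, this becomes an operator-valued symbol $\mathrm K_h(s)$ acting on $\mathrm J(s)-\Pi_h\mathrm J(s)$; up to algebra this symbol is the one appearing in the error representation \cite[Formula (43)]{BaBaSaVe:2013}, which expresses it as a composition of the forward operator $\mathrm V(s/c)$ and the inverse Galerkin operator $\mathrm V_h(s/c)^{-1}$ on the lowest-order Raviart--Thomas space.

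Second, I would bound $\mathrm K_h(s)$ on $\mathbb C_+$ by combining the two known symbol estimates: the continuity of the boundary operator $\mathrm V(s/c):\HdivH\to\HcurlH$ from \cite[Formula (32)]{BaBaSaVe:2013}, and the discrete stability bound on $\mathrm V_h(s/c)^{-1}$ from \cite[Lemma 4.8]{BaBaSaVe:2013}. Each is of the form $C|s|^a/\sigma^b$ with $\sigma=\mathrm{Re}\,s$; multiplying them, together with the $s^{-1}$ contributed by $\partial^{-1}$ and the boundedness of $\jump{\gamma_\tau}$, gives a composite symbol bound $\|\mathrm K_h(s)\|\le C|s|^\mu/\sigma^\ell$ with specific exponents $\mu,\ell$. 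Finally, I would apply \cite[Theorem 7.1]{DoSa:2013}, the time-domain transfer theorem, which converts any such Laplace-domain bound into a time-domain inequality of the form $\int_0^t\|\phi^{(k)}(\tau)\|\,\mathrm d\tau$ weighted by a polynomial envelope. The particular envelope $(t^3/(1+t))\max\{1,t^4\}$ and the differentiability $k=6$ in the statement are exactly the output of this machinery when $\mu,\ell$ take the values coming from the previous step, with $\phi=\bff J-\Pi_h\bff J$.

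\emph{Main obstacle.} The real work is the precise bookkeeping of exponents. The powers of $|s|$ and $\sigma^{-1}$ must be tracked through a composition of three bounded operators, together with the extra $s$-factors implicit in the definition of $\bs{\mathcal V}_c$ and $\partial^{-1}$; any slip in those exponents alters the polynomial profile in $t$ and the required order of differentiation, which is precisely the numerical data being compared against our Theorem~\ref{th:2}. Since the sole purpose of Theorem~\ref{prop:5.2} is benchmarking, the whole content of the proof lies in pinning down $(t^3,t^4,6)$ rather than larger-than-needed placeholders.
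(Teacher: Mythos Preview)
Your proposal is correct and follows essentially the same route the paper sketches: combine the error representation \cite[Formula (43)]{BaBaSaVe:2013}, the forward bound \cite[Formula (32)]{BaBaSaVe:2013}, and the discrete stability estimate \cite[Lemma 4.8]{BaBaSaVe:2013} into a Laplace-domain symbol bound, then push it to the time domain via \cite[Theorem 7.1]{DoSa:2013}. Your observation that the entire content of the argument is the exponent bookkeeping leading to $(t^3,t^4,6)$ is exactly right, and is precisely what the paper leaves implicit.
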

Compared to Corollary \ref{cor:2.3} and Theorem \ref{th:2}, these estimates require higher temporal regularity of the input data. As mentioned in the introduction, there is a notable loss when going back and forth through the Laplace domain. Theorem \ref{prop:5.2} is written for a polyhedral domain, but the techniques in \cite{BaBaSaVe:2013} can be easily extended to any Lipschitz domain and any discrete subspace defined on it. The constant grows polynomially in time and henceforth the result does not rule out long time polynomial growth of the error even when the input data is compactly supported, which we show not to happen.

The following result is a special case of \cite[Theorem 3.1]{Rynne:1999} when $p=1$. It ruled out the long time growth given compactly supported input data. Nevertheless, understanding the underlying theory it relies on presents a daunting challenge for anyone who intends to extend the result to a more general setting. For example, the approach of \cite{Rynne:1999} gives few clues to a possible discrete error estimate. In comparison with Theorem \ref{th:1}, the estimate applies to scatterers only with smooth boundaries, while our results cover the scatterers with general Lipschitz boundary. Our result also tolerates input data with rougher spatial regularity, $\HcurlH$ versus $\bff H^{3/2}(\Gamma)$.
\begin{theorem}
Assume $\Omega_-$ is a bounded domain with $C^\infty$ boundary. Given \\ $\bs\beta\in \mathcal C_+^0(\bff H^{3/2}(\Gamma))\cap \mathcal C_+^1(\bff H^{1/2}(\Gamma))$, compactly supported in $[0,T]$, let $\bff J$ be the solution to $\bs{\mathcal V}_c*\bff J = \bs\beta$ and define $\bff E=\bs{\mathcal S}_c*\bff J$, then $\bff E\in \mathcal C_+^0(\bff H^1(\Omega_+))\cap \mathcal C_+^1(\bff L^2(\Omega_+)),\bff J\in \mathcal C_+^0(\bff H^{1/2}(\Gamma))$ and the following bound holds for all $t\geq 0$,
\[
\| \bff E(t)\|_{\bff H^1(\Omega_+)} + \| \bff J(t)\|_{\bff H^{1/2}(\Gamma)} \leq C_T \sup_{\tau\in [0,T]} \left( \| \dot{\bs\beta}(\tau)\|_{\bff H^{1/2}(\Gamma)} + \| \bs\beta(\tau)\|_{\bff H^{3/2}(\Gamma )} \right).
\]
\end{theorem}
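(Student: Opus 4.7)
The plan is to reuse the first-order system framework of Section~\ref{sec:2} but in a stronger Hilbert space setting that takes advantage of the $C^\infty$ regularity of $\Gamma$. Setting $\bs X_h=\HdivH$ (so $\bs X_h^\circ=\{\bff 0\}$) and $\bs\xi=\bff 0$, the goal reduces to solving for $(\bff u,\bff v)$ with $\dot{\bff u}=c\,\nabla\times\bff v$, $\dot{\bff v}=-c\,\nabla\times\bff u$, $\pi_\tau\bff u=\bs\beta$, from which $\bff E$ and $\bff J=-c\,\jump{\gamma_\tau\bff v}$ are read off as in Proposition~\ref{prop:4.7}.

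First I would build a lifting operator $L:\bff H^{3/2}(\Gamma)\to\bff H^2(\mathbb R^3)$ with $\pi_\tau L\bs\beta=\bs\beta$ and a bound $\|L\bs\beta\|_{\bff H^2(\mathbb R^3)}\lesssim\|\bs\beta\|_{\bff H^{3/2}(\Gamma)}$; this requires the $C^\infty$ hypothesis, since the analogous lifting at this order of smoothness is not available on merely Lipschitz boundaries. Next I would introduce the stronger Hilbert space $\mathcal H':=\bff H^1(\mathbb R^3\setminus\Gamma)\times\bff H^1(\mathbb R^3)$ and a higher-regularity version $\mathcal A'$ of the operator $\mathcal A$, with domain consisting of pairs whose curls lie in $\bff H^1$ and that satisfy the homogeneous transmission conditions in an appropriate strong sense. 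The heart of the argument is to verify that $\pm\mathcal A'$ are maximal dissipative on $\mathcal H'$, so that $\mathcal A'$ generates a unitary $C_0$-group; this lifts Proposition~\ref{prop:4.1} one derivative up and it is here that the smoothness of $\Gamma$ is truly indispensable, through elliptic regularity for the curl--curl transmission problem.

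With $\mathcal A'$ in hand, I would set $W_{\TC}(t):=(L\bs\beta(t),\bff 0)$, $F(t):=\dot W_{\TC}(t)-\mathcal A'W_{\TC}(t)$, and invoke Theorem~\ref{thm:semigroup} in $\mathcal H'$ to solve $\dot W_0=\mathcal A'W_0+F$ with vanishing initial data. The sum $W:=W_{\TC}+W_0$ solves the lifted first-order system and its $\mathcal H'$-norm is controlled by $\int_0^t\bigl(\|\bs\beta(\tau)\|_{\bff H^{3/2}(\Gamma)}+\|\dot{\bs\beta}(\tau)\|_{\bff H^{1/2}(\Gamma)}\bigr)\mathrm d\tau$. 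For $t\in[0,T]$ this cumulative integral is majorized by $T$ times the supremum on $[0,T]$. For $t>T$, the compact support of $\bs\beta$ forces $F\equiv 0$, so $W$ subsequently evolves under the unitary group on $\mathcal H'$ and its norm is frozen at its value at $t=T$. The two regimes combine into the uniform-in-$t$ bound with a constant $C_T$ depending on $T$. The target quantities are then recovered by trace theorems: $\bff E\in\mathcal C^0_+(\bff H^1(\Omega_+))\cap\mathcal C^1_+(\bff L^2(\Omega_+))$ follows from the $\bff H^1$ control of $\bff u$, while $\bff J=-c\,\jump{\gamma_\tau\bff v}\in\mathcal C^0_+(\bff H^{1/2}(\Gamma))$ follows from the standard $\bff H^1(\mathbb R^3\setminus\Gamma)\to\bff H^{1/2}(\Gamma)$ trace applied on either side of $\Gamma$.

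The main obstacle is the generation claim for $\mathcal A'$ on $\mathcal H'$: one has to identify the correct domain, prove maximal dissipativity at this higher regularity, and, most delicately, establish that the resolvent equation $(\mathcal I-\mathcal A')W=F$ preserves $\bff H^1$-regularity. This is an elliptic regularity problem for the curl--curl system with transmission conditions, and it rests crucially on the smoothness of $\Gamma$. It is precisely this step that explains why the proof cannot be transplanted to the Lipschitz setting, which in turn is why the authors' own theorems, resting only on the $\bff L^2\times\bff L^2$ group of Proposition~\ref{prop:4.1}, have strictly wider scope than this Rynne-type estimate.
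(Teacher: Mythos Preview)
The paper does not prove this theorem. It is quoted verbatim as a special case of \cite[Theorem~3.1]{Rynne:1999} and appears in Section~5 solely as a point of comparison with the authors' own results; no argument is given or sketched. So there is no ``paper's own proof'' to compare your proposal against.

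That said, your sketch is a plausible outline for an \emph{alternative} proof in the spirit of Section~4, and it is genuinely different from Rynne's approach (which, as the paper remarks, rests on heavier machinery and offers little guidance toward discrete estimates). A few points deserve care if you want to turn the sketch into a proof. First, dissipativity of $\mathcal A'$ in the $\bff H^1\times\bff H^1$ inner product is not automatic: the $\bff L^2$ part cancels as in Proposition~\ref{prop:4.1}, but the gradient part produces terms like $(\nabla(\nabla\times\bff v),\nabla\bff u)_{\mathbb R^3\setminus\Gamma}-(\nabla(\nabla\times\bff u),\nabla\bff v)_{\mathbb R^3}$, and you will have to argue carefully (or choose an equivalent graph-type norm) to make these vanish. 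Second, the semigroup input you need is $F\in\mathcal C^1_0(\mathbb R_+;\mathcal H')$, but with $W_{\TC}=(L\bs\beta,\bff0)$ the forcing is $F=(L\dot{\bs\beta},\,c\,\nabla\times L\bs\beta)$, whose time derivative involves $\ddot{\bs\beta}$; the stated hypothesis only gives $\bs\beta\in\mathcal C^1_+(\bff H^{1/2}(\Gamma))$, so you will need either a weaker variant of Theorem~\ref{thm:semigroup} or a density argument. Third, your lifting must act consistently at two regularity levels ($\bff H^{1/2}\to\bff H^1$ and $\bff H^{3/2}\to\bff H^2$) so that both components of $F$ land in $\bff H^1$; this is available for smooth $\Gamma$ but should be stated explicitly. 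You correctly flag the resolvent/elliptic-regularity step as the crux; that is indeed where the $C^\infty$ assumption on $\Gamma$ earns its keep.
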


\bibliographystyle{abbrv}
\bibliography{biblio}

\end{document}